\numberwithin{equation}{section}
\numberwithin{figure}{section}
\theoremstyle{plain}
\newtheorem{thm}{\protect\theoremname}
\theoremstyle{plain}
\newtheorem{lem}[thm]{\protect\lemmaname}
\theoremstyle{plain}
\newtheorem{cor}[thm]{\protect\corollaryname}
\providecommand{\corollaryname}{Corollary}
\providecommand{\lemmaname}{Lemma}
\providecommand{\theoremname}{Theorem}
\begin{document}
\title{Some quantitative one-sided weighted estimates}
\begin{abstract}
In this paper we provide some quantitative one-sided estimates that
recover the dependences in the classical setting. Among them we provide
estimates for the one-sided maximal function in Lorentz spaces and
we show that the conjugation method for commutators works as well
in this setting.
\end{abstract}

\author{María Lorente}
\address{Departamento de Análisis Matemático, Estadística e Investigación Operativa
y Matemática Aplicada. Facultad de Ciencias. Universidad de Málaga
(Málaga, Spain)}
\email{m\_lorente@uma.es }
\author{Francisco J. Martín-Reyes}
\address{Departamento de Análisis Matemático, Estadística e Investigación Operativa
y Matemática Aplicada. Facultad de Ciencias. Universidad de Málaga
(Málaga, Spain)}
\email{martin\_reyes@uma.es}
\author{Israel P. Rivera-Ríos}
\address{Departamento de Análisis Matemático, Estadística e Investigación Operativa
y Matemática Aplicada. Facultad de Ciencias. Universidad de Málaga
(Málaga, Spain). Departamento de Matemática. Universidad Nacional
del Sur (Bahía Blanca, Argentina).}
\email{israelpriverarios@uma.es}
\maketitle

\section{Introduction and main results}

Given a weight $w$, namely a non-negative locally integrable function
we recall that $w\in A_{p}$ for $p>1$ if
\[
[w]_{A_{p}}=\sup_{Q}\frac{1}{|Q|}\int_{Q}w\left(\frac{1}{|Q|}\int_{Q}w^{-\frac{1}{p-1}}\right)^{p-1}<\infty
\]
and that $w\in A_{1}$ if 
\[
[w]_{A_{1}}=\left\Vert \frac{Mw}{w}\right\Vert _{L^{\infty}}<\infty
\]
where $M$ stands for the usual Hardy-Littlewood maximal function,
namely 
\[
Mw(x)=\sup_{x\in Q}\frac{1}{|Q|}\int_{Q}w
\]
where each $Q$ is a cube with its sides parallel to the axis. The
quantities $[w]_{A_{p}}$ are the so called $A_{p}$ constants. The
main feature of the $A_{p}$ classes is that they characterize the
weighted $L^{p}$ boundedness of $M$ and of non-degenerate Calderón-Zygmund
operators.

In the last decade quantitative weighted estimates in terms of the
$A_{p}$ constants have been a topic that has atracted the attention
of a number of authors. First results in that direction were obtained
in the early 90s in \cite{B}. Years later Astala, Iwaniec and Saksman
\cite{AIS} conjectured the linear dependence on the $A_{2}$ constant
for the Beurling transform. The fact that lead them to such a conjecture
was the fact that if such a result held, the solutions of the Beltrami
equation would have improved integrability properties. That conjecture
was settled in \cite{PV}. Soon a natural question arised, whether
the dependence on the $A_{2}$ constant was linear as well for general
Calderón-Zygmund operators. That question was solved in the positive
by Hytönen \cite{H} and lead to the development of sparse domination
theory, which has been a very fruitful tool in the field. 

Let us turn our attention now to the one-sided setting. We recall
that $w\in A_{p}^{+}$ for $p>1$ if 
\[
[w]_{A_{p}^{+}}=\sup_{a<b<c}\frac{1}{c-a}\int_{a}^{b}w\left(\frac{1}{c-a}\int_{b}^{c}w^{-\frac{1}{p-1}}\right)^{p-1}<\infty.
\]
Note that this classes of weights characterize the weighted boundedness
of the one-sided maximal function $M^{+}$. We recall that 
\[
M^{+}f(x)=\sup_{h>0}\frac{1}{h}\int_{x}^{x+h}|f(y)|dy,\qquad M^{-}f(x)=\sup_{h>0}\frac{1}{h}\int_{x-h}^{x}|f(y)|dy.
\]
We say that $w\in A_{1}^{+}$ if 
\[
[w]_{A_{1}^{+}}=\left\Vert \frac{M^{-}w}{w}\right\Vert _{L^{\infty}}<\infty.
\]
This class of weights characterizes the weighted weak type $(1,1)$
boundedness of $M^{+}$. 

At this point we would like to recall some quantitative weighted estimates.
In \cite{MRdT} de la Torre and Martín-Reyes showed that 
\begin{align}
\|M^{+}f\|_{L^{p}(w)} & \lesssim[w]_{A_{p}^{+}}^{\frac{1}{p-1}}\|f\|_{L^{p}(w)}.\nonumber \\
\|M^{+}f\|_{L^{p,\infty}(w)} & \lesssim[w]_{A_{p}^{+}}^{\frac{1}{p}}\|f\|_{L^{p}(w)}.\label{eq:WeakM+}
\end{align}
It is worth noting that this inequality matches the dependence in
the classical setting. One may think that the same should happen with
the remainder of operators and, up until now for all the operators
for which a quantitative counterpart has been settled, that has been
the case. Let us revisit some further results. 

In \cite{RV1} Riveros and Vidal showed that for one-sided fractional
integrals, if as usual, $0<\alpha<1$, $1<p<\frac{1}{\alpha}$ , $\frac{1}{q}=\frac{1}{p}-\alpha$
and $w\in A_{p,q}^{+}$ then 
\begin{align*}
\|I_{\alpha}^{+}f\|_{L^{q,\infty}(w^{q})} & \lesssim[w]_{A_{p,q}^{+}}^{1-\alpha}\|f\|_{L^{p}(w^{p})}\\
\|I_{\alpha}^{+}f\|_{L^{q}(w^{q})} & \lesssim[w]_{A_{p,q}^{+}}^{(1-\alpha)\max\left\{ 1,\frac{p'}{q}\right\} }\|f\|_{L^{p}(w^{p}).}
\end{align*}
See Section \ref{sec:Preliminaries} for the precise definition of
$A_{p,q}^{+}$ and of $I_{\alpha}^{+}$. 

Besides those estimates, in \cite{RV} they also provide sharp Coifman-Fefferman
estimates, and sharp $A_{1}^{+}$ estimates and they also show that
for every one-sided Calderón-Zygmund operator $T^{+}$ (see Section
\ref{sec:Preliminaries} for the precise definition of $T^{+}$)
\[
\|T^{+}f\|_{L^{1,\infty}(w)}\lesssim[w]_{A_{1}^{+}}\log\left(e+[w]_{A_{1}^{+}}\right)\|f\|_{L^{1}(w)}.
\]
As we already mentioned above, the dependences in all the aforementioned
estimates match their classical setting counterparts.

In contrast to what happens with the scalar setting, the $A_{2}$
conjecture for one-sided Calderón-Zygmund operators remains an open
problem. In that direction, the most recent advance is due to Chen,
Han and Lacey \cite{CHL} who showed for the martingale transform
$G$ that 
\begin{equation}
\|Gf\|_{L^{2}(w)}\lesssim[w]_{A_{2}^{+}}\|f\|_{L^{2}(w)}.\label{eq:A2Martingale}
\end{equation}

Now we present our contribution. Quite recently in \cite{ADNOP} sharp
weighted estimates in terms of the $A_{p}$ classes for the maximal
function on weighted Lorentz spaces were settled. Here we provide
the following one sided counterpart. 
\begin{thm}
\label{thm:LorentzAp+}Let $1<p<\infty$ and $q\in(0,\infty].$ Then,
if $w\in A_{p}^{+}$ we have that
\[
\|M^{+}\|_{L^{p,q}(w)\to L^{p,q}(w)}\lesssim\begin{cases}
(1+A)[\sigma]_{A_{\infty}^{-}}^{\frac{1}{p}}[w]_{A_{p}^{+}}^{\frac{1}{p}} & \text{if \ensuremath{p\leq q\leq\infty};}\\
(1+A)[\sigma]_{A_{\infty}^{-}}^{\frac{1}{q}}[w]_{A_{p}^{+}}^{\frac{1}{p}} & \text{if \ensuremath{0<q\leq p}}
\end{cases}
\]
where $\sigma=w^{1-p'}$. 
\end{thm}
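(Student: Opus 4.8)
The plan is to follow the scheme of \cite{ADNOP} in the one-sided setting, combining a one-sided sparse/Calder\'on--Zygmund decomposition adapted to $M^{+}$ with the classical real-interpolation (Lorentz-space) machinery, keeping careful track of the constants. First I would reduce matters to a dyadic-type model operator. Since $M^{+}f(x)$ only sees averages over intervals $[x,x+h]$ to the right of $x$, one can dominate $M^{+}f$ pointwise (up to a dimensional/absolute constant) by a supremum of averages over a suitable one-sided analogue of a dyadic grid, and then by a one-sided sparse operator $\mathcal{A}^{+}_{\mathcal S}f=\sum_{I\in\mathcal S}\langle |f|\rangle_{I^-}\mathbf 1_{I^+}$, where $I^\pm$ denote the right/left halves of $I$ and $\mathcal S$ is a sparse family. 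This is the one-sided substitute for the ``sparse domination of $M$'' step used in \cite{ADNOP}.

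Next I would establish the two endpoint estimates on which the interpolation rests. On one hand, the strong-type bound $\|M^{+}f\|_{L^{p}(w)}\lesssim [w]_{A_{p}^{+}}^{1/(p-1)}\|f\|_{L^{p}(w)}$ is quoted directly from \cite{MRdT}; on the other hand, the key gain comes from a \emph{weak-type} estimate with the \emph{correct} exponent split, namely a quantitative one-sided Sawyer/Coifman--Fefferman testing bound of the form $\|M^{+}f\|_{L^{p,\infty}(w)}\lesssim [\sigma]_{A_{\infty}^{-}}^{1/p}[w]_{A_{p}^{+}}^{1/p}\|f\|_{L^{p}(w)}$, where $[\sigma]_{A_{\infty}^{-}}$ is the one-sided Fujii--Wilson $A_\infty$ constant of $\sigma=w^{1-p'}$ (this is where the term $[\sigma]_{A_\infty^-}$ enters, improving on the cruder $[w]_{A_p^+}^{1/p}$). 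This weak bound would be proved by testing $\mathcal A^+_{\mathcal S}$ on $f=\sigma\mathbf 1_E$ and using a one-sided reverse-H\"older / $A_\infty^-$ sparse-counting lemma to sum the sparse pieces, exactly mirroring the linear-in-$[w]_{A_p}$ arguments of Hyt\"onen--P\'erez type but in the one-sided framework; the constant $(1+A)$ presumably absorbs the comparison constant between $M^+$ and its sparse model. The mixed $A_p^+$--$A_\infty^-$ weak bound is essentially \eqref{eq:WeakM+} refined, and I would cite or reprove it in that sharpened form.

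Finally I would run Lorentz-space interpolation. Writing $\|M^{+}f\|_{L^{p,q}(w)}$ via the $K$-functional (or equivalently via the distribution function and the Lorentz norm formula $\|g\|_{L^{p,q}(w)}^{q}=p\int_0^\infty \lambda^{q} w(\{|g|>\lambda\})^{q/p}\frac{d\lambda}{\lambda}$), I would split $f=f\mathbf 1_{\{|f|>\lambda\}}+f\mathbf 1_{\{|f|\le\lambda\}}$ at each height, apply the weak-type bound to the ``big'' part and the strong $L^{s}(w)$ bound (for a suitable $s$ on either side of $p$, chosen $s<p$ when $q\le p$ and $s>p$ when $q\ge p$) to the ``small'' part, and integrate in $\lambda$. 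The case $p\le q\le\infty$ is the ``easy'' side where the weak-type exponent $1/p$ survives the interpolation untouched, giving $[\sigma]_{A_\infty^-}^{1/p}$; the case $0<q\le p$ requires interpolating \emph{down} and reweighting the $\lambda$-integral with the extra $q/p$ power, which is what converts the exponent to $[\sigma]_{A_\infty^-}^{1/q}$. The main obstacle I anticipate is precisely this bookkeeping in the $q\le p$ case: one must choose the interpolation parameters and the auxiliary exponent $s$ so that the powers of $[w]_{A_p^+}$ do \emph{not} degrade (they must stay at $1/p$, not jump to $1/(p-1)$) while the $[\sigma]_{A_\infty^-}$ power lands on $1/q$, which forces a delicate optimization of the splitting height and a use of the self-improvement ($\epsilon$-room) of the $A_\infty^-$ / reverse-H\"older constant rather than of $[w]_{A_p^+}$ itself. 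A secondary technical point is setting up the one-sided dyadic grids and the sparse-domination lemma cleanly, since the standard three-lattice trick must be replaced by its shifted one-sided analogue.
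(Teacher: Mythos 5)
Your overall shape (a weak-type input plus a Lorentz-space interpolation whose gain comes from an $\varepsilon$ of self-improvement) points in the right direction, but the proposal misses the one ingredient the proof actually turns on, and two of your intermediate steps would not deliver the stated constants. The paper's proof is a direct application of the black-box result of \cite{ADNOP} (Theorem \ref{thm:ADNOP}): one needs $M^{+}$ to be of weak type on $L^{p/r}(w)$ for some $r>1$, and the constants in the conclusion are $(r')^{1/p}$, resp. $(4r'/q)^{1/q}$, times the weak norm to the power $1/r$. The whole content is therefore the quantitative openness of $A_{p}^{+}$: Lemma \ref{lem:RH} (from \cite{MRdT}) gives $w\in A_{s}^{+}$ with $[w]_{A_{s}^{+}}\leq2^{p}[w]_{A_{p}^{+}}$ for $s=\frac{p+\delta}{1+\delta}$ and $\delta\simeq[\sigma]_{A_{\infty}^{-}}^{-1}$, which translates into $r=p/s$ with $r'\lesssim p'[\sigma]_{A_{\infty}^{-}}$. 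That is where $[\sigma]_{A_{\infty}^{-}}$ enters --- through $r'$ --- and not through the weak-type estimate. Your proposed ``mixed'' weak bound $\|M^{+}f\|_{L^{p,\infty}(w)}\lesssim[\sigma]_{A_{\infty}^{-}}^{1/p}[w]_{A_{p}^{+}}^{1/p}\|f\|_{L^{p}(w)}$ is weaker than the known (\ref{eq:WeakM+}) (which carries no $A_{\infty}^{-}$ factor) and is in any case not the right input: what is needed is the plain weak bound at the strictly smaller exponent $p/r$, and the size of the gap $p-p/r$ is exactly what $[\sigma]_{A_{\infty}^{-}}$ measures. Without stating and using this openness lemma, no bookkeeping of interpolation parameters will produce the theorem.

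Second, your interpolation scheme (weak type at $p$ for the large part, strong type at an auxiliary exponent $s$ for the small part) degrades the power of $[w]_{A_{p}^{+}}$: any strong bound for $M^{+}$ near $p$ costs $[w]_{A_{s}^{+}}^{1/(s-1)}$, which is precisely the $1/(p-1)$ power you are trying to avoid. Theorem \ref{thm:ADNOP} circumvents this by never invoking a strong bound: the small part is controlled by positivity together with the hypothesis $|T1|\leq A$ (for $M^{+}$ one simply has $M^{+}1=1$, so $A=1$; it is not a sparse-comparison constant as you suggest). Relatedly, the sparse domination step is unnecessary here, and as written your one-sided sparse operator has the wrong orientation for $M^{+}$: the averages should be taken over the right portion of each interval and the indicator placed on the left.
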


We remit the reader to Section \ref{sec:Preliminaries} for the definition
of $A_{\infty}^{-}$.

In Subsection \ref{subsec:ResultJointAp+Ainfty-} we provide as well
a result assuming that we have a pair of weights $(v,w)$ that satisfy
the joint $A_{p}^{+}$ condition (see the definition in Section \ref{sec:Preliminaries})
and such that $w^{1-p'}\in A_{\infty}^{-}$. 

Our second contribution in this work is related to commutators. Coifman,
Rochberg and Weiss showed in their seminal work \cite{CRW} that $b\in BMO$
implies that the commutator 
\[
[b,T]f=bT(f)-T(bf)
\]
where $T$ is a singular integral operator is bounded on $L^{p}$.
They provided two proofs of that fact. The first one relied upon a
rather involved good-$\lambda$ type argument. The second one hinged
upon a very general approach which is nowadays known as the conjugation
method. Such a method has proven to be a powerful tool to provide
sharp weighted estimates for commutators. We remit the reader to \cite{BMMST}
and the references therein for a thorough treatise on this method. 

Our next Theorem is a one sided version of the conjugation method.
\begin{thm}
\label{thm:Conjugation}Let $1<p_{0},p,q<\infty$ and let $T$ be
an operator such that for every $w^{q}\in A_{p_{0}}^{+}$
\[
\|Tf\|_{L^{q}(w^{q})}\lesssim\varphi([w^{q}]_{A_{p_{0}}^{+}})\|f\|_{L^{p}(w^{p})}.
\]
Then there exists $\kappa>0$ such that
\[
\|T_{b}^{k}f\|_{L^{q}(w^{q})}\lesssim\varphi\left(\kappa[w^{q}]_{A_{p_{0}}^{+}}\right)[w^{q}]_{A_{p_{0}}^{+}}^{k\max\left\{ 1,\frac{1}{p_{0}-1}\right\} }\|b\|_{BMO}^{k}\|f\|_{L^{p}(w^{p})}
\]
where $T_{b}^{1}f=[b,T]f$ and $T_{b}^{k}f=[b,T_{b}^{k-1}]f$ for
$k>1$.
\end{thm}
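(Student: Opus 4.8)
The plan is to adapt the quantitative version of the conjugation method of Coifman, Rochberg and Weiss \cite{CRW} (see \cite{BMMST}) to the one-sided setting, keeping track of the dependence on $[w^{q}]_{A_{p_{0}}^{+}}$ at each step. For complex $z$ introduce the analytic family $S_{z}f=e^{zb}T(e^{-zb}f)$. Since $b$ is real-valued, a direct differentiation gives $\frac{d}{dz}S_{z}=[b,S_{z}]$ as operators; as this identity iterates and $S_{0}=T$, one obtains $\frac{d^{k}}{dz^{k}}S_{z}f\big|_{z=0}=T_{b}^{k}f$. The scalar function $z\mapsto S_{z}f(x)$ is entire for each $x$ --- first for $b\in L^{\infty}$ and $f$ bounded with compact support, the general $b\in BMO$ following by the usual truncation of $b$ together with Fatou's lemma --- so Cauchy's integral formula yields, for every $\varepsilon>0$,
\[
T_{b}^{k}f=\frac{k!}{2\pi i}\int_{|z|=\varepsilon}\frac{S_{z}f}{z^{k+1}}\,dz ,
\]
and hence, by Minkowski's integral inequality (here $1<q<\infty$),
\[
\|T_{b}^{k}f\|_{L^{q}(w^{q})}\le\frac{k!}{\varepsilon^{k}}\sup_{|z|=\varepsilon}\|S_{z}f\|_{L^{q}(w^{q})} .
\]
Everything then reduces to estimating $\|S_{z}f\|_{L^{q}(w^{q})}$ on the circle $|z|=\varepsilon$ and then optimising $\varepsilon$.

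Fix $z$ with $|z|=\varepsilon$, set $t=q\,\mathrm{Re}\,z$ (so $|t|\le q\varepsilon$) and $u=e^{(\mathrm{Re}\,z)b}w$. Because $b$ is real, $\|S_{z}f\|_{L^{q}(w^{q})}=\|T(e^{-zb}f)\|_{L^{q}(u^{q})}$, while on the domain side the exponential factors cancel: $\|e^{-zb}f\|_{L^{p}(u^{p})}=\|f\|_{L^{p}(w^{p})}$. Thus, as soon as $u^{q}=e^{tb}w^{q}\in A_{p_{0}}^{+}$, applying the hypothesis on $T$ with the weight $u$ and the function $e^{-zb}f$ gives
\[
\|S_{z}f\|_{L^{q}(w^{q})}\lesssim\varphi\!\left([e^{tb}w^{q}]_{A_{p_{0}}^{+}}\right)\|f\|_{L^{p}(w^{p})}
\]
(we assume, as is the case in applications, that $\varphi$ is non-decreasing). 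The whole argument therefore hinges on the following one-sided stability lemma: there are constants $c_{0},\kappa>0$, depending only on $p_{0}$, such that whenever $v\in A_{p_{0}}^{+}$ and $b\in BMO$,
\[
|t|\le\frac{c_{0}}{\|b\|_{BMO}\,[v]_{A_{p_{0}}^{+}}^{\max\left\{1,\frac{1}{p_{0}-1}\right\}}}\quad\Longrightarrow\quad e^{tb}v\in A_{p_{0}}^{+}\ \text{ and }\ [e^{tb}v]_{A_{p_{0}}^{+}}\le\kappa\,[v]_{A_{p_{0}}^{+}} .
\]

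To prove this lemma I would fix a triple $a<\beta<c$, put $m=\frac{1}{c-a}\int_{a}^{c}b$, write $e^{tb}=e^{tm}e^{t(b-m)}$ and $e^{-\frac{t}{p_{0}-1}b}=e^{-\frac{t}{p_{0}-1}m}e^{-\frac{t}{p_{0}-1}(b-m)}$, and note that the factors $e^{\pm tm}$ cancel in the $A_{p_{0}}^{+}$ quotient. On each of the two surviving averages one applies H\"older's inequality, absorbs the $e^{t(b-m)}$ factors using the John--Nirenberg inequality on the interval $(a,c)$ (legitimate as long as $|t|$ times a fixed multiple of the relevant reverse H\"older exponents stays $\lesssim\|b\|_{BMO}^{-1}$), and controls the remaining high powers of $v$ and of $\sigma_{v}=v^{1-p_{0}'}$ by the one-sided sharp reverse H\"older inequality of Section \ref{sec:Preliminaries}, applied to $v\in A_{\infty}^{+}$ (exponent $\sim1+[v]_{A_{\infty}^{+}}^{-1}$ with $[v]_{A_{\infty}^{+}}\lesssim[v]_{A_{p_{0}}^{+}}$) and to $\sigma_{v}\in A_{\infty}^{-}$ (exponent $\sim1+[\sigma_{v}]_{A_{\infty}^{-}}^{-1}$ with $[\sigma_{v}]_{A_{\infty}^{-}}\lesssim[\sigma_{v}]_{A_{p_{0}'}^{-}}=[v]_{A_{p_{0}}^{+}}^{\frac{1}{p_{0}-1}}$). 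Putting the two estimates together bounds the $A_{p_{0}}^{+}$ quotient of $e^{tb}v$ by $\kappa[v]_{A_{p_{0}}^{+}}$, and the two admissible ranges of $|t|$ intersect precisely in the stated one, which is where the exponent $\max\left\{1,\frac{1}{p_{0}-1}\right\}$ comes from. I expect this lemma to be the main obstacle: in contrast with the classical case, the $A_{p_{0}}^{+}$ functional involves two distinct intervals --- a ``past'' interval where $v$ is averaged and a ``future'' interval where $\sigma_{v}$ is averaged, both normalised by their union --- so reverse H\"older cannot be invoked on a single cube and one must work with its more delicate one-sided version; making the bookkeeping between these two intervals and the two reverse H\"older exponents fit together cleanly is the delicate point.

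Finally, apply the lemma with $v=w^{q}$ and $t=q\,\mathrm{Re}\,z$: the choice
\[
\varepsilon=\frac{c_{0}}{q\,\|b\|_{BMO}\,[w^{q}]_{A_{p_{0}}^{+}}^{\max\left\{1,\frac{1}{p_{0}-1}\right\}}}
\]
makes $|t|\le q\varepsilon$ admissible for every $z$ on the circle $|z|=\varepsilon$, so that $e^{tb}w^{q}\in A_{p_{0}}^{+}$ with $[e^{tb}w^{q}]_{A_{p_{0}}^{+}}\le\kappa[w^{q}]_{A_{p_{0}}^{+}}$ there. Feeding this information into the estimates above (and using that $\varphi$ is non-decreasing) gives
\[
\|T_{b}^{k}f\|_{L^{q}(w^{q})}\lesssim\frac{k!}{\varepsilon^{k}}\,\varphi\!\left(\kappa[w^{q}]_{A_{p_{0}}^{+}}\right)\|f\|_{L^{p}(w^{p})} ,
\]
and since $\varepsilon^{-k}=(q/c_{0})^{k}\,\|b\|_{BMO}^{k}\,[w^{q}]_{A_{p_{0}}^{+}}^{k\max\left\{1,\frac{1}{p_{0}-1}\right\}}$, absorbing the fixed constant $k!\,(q/c_{0})^{k}$ into the implicit constant yields exactly the assertion of the theorem.
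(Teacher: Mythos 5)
Your overall architecture is exactly the paper's: the analytic family $S_{z}f=e^{zb}T(e^{-zb}f)$, the Cauchy integral representation of $T_{b}^{k}$, the transfer of the exponential onto the weight, the reduction to a stability lemma for $[e^{tb}w^{q}]_{A_{p_{0}}^{+}}$ (the paper's Lemma \ref{lem:etbwleqw}), and the final optimisation in $\varepsilon$ all match, and that part of your argument, including the exponent bookkeeping, is correct.

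The genuine gap is in your proof of the stability lemma, at precisely the point you flag as ``delicate'' and then leave unresolved. Working with a bare triple $a<\beta<c$, H\"older plus John--Nirenberg leaves you with the quantities $\bigl(\int_{a}^{\beta}v^{1+\varepsilon}\bigr)^{1/(1+\varepsilon)}$ and $\bigl(\int_{\beta}^{c}\sigma_{v}^{1+\varepsilon}\bigr)^{(p_{0}-1)/(1+\varepsilon)}$, and the one-sided reverse H\"older inequality (Lemma \ref{lem:OneSidedRHIAp}) is \emph{not} an inequality on a single interval: for $v\in A_{p_{0}}^{+}$ it controls $\int_{a}^{\beta}v^{1+\varepsilon}$ only by $\int_{a}^{\beta'}v$ with $\beta'>\beta$ (it enlarges the interval to the right), while for $\sigma_{v}\in A_{p_{0}'}^{-}$ it controls $\int_{\beta}^{c}\sigma_{v}^{1+\varepsilon}$ only by $\int_{\beta''}^{c}\sigma_{v}$ with $\beta''<\beta$ (it enlarges to the left). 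The two enlarged intervals overlap, and $\frac{1}{c-a}\int_{a}^{\beta'}v\,\bigl(\frac{1}{c-a}\int_{\beta''}^{c}\sigma_{v}\bigr)^{p_{0}-1}$ with $\beta''<\beta'$ is \emph{not} controlled by $[v]_{A_{p_{0}}^{+}}$: overlapping configurations are exactly what the one-sided condition does not see. The missing idea is the paper's Lemma \ref{lem:Gap}: one only needs to verify the $A_{p_{0}}^{+}$ condition for ``gapped'' configurations, say $e^{tb}v$ averaged over the first quarter of an interval $(a,d)$ and $(e^{tb}v)^{-1/(p_{0}-1)}$ over the last quarter. Then the two reverse H\"older enlargements both stop at the midpoint $m$ of $(a,d)$ and reconstitute an honest $A_{p_{0}}^{+}$ quotient for the triple $a<m<d$; a separate maximal-function, weak-$(1,1)$ argument (Lemma \ref{lem:Gap}) upgrades the gapped condition to the full one. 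Without this (or an equivalent) reduction, your lemma, and hence the theorem, is not proved.
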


We shall derive some corollaries from this result in Section \ref{sec:CorConjugation}.

The remainder of this paper is organized as follows. In Section \ref{sec:Preliminaries}
we gather some definitions and results that will be useful for our
purposes. We devote Section \ref{sec:ProofsMainResults} to settle
the main results. In the last section we provide some corollaries
and conjectures related to Theorem \ref{thm:Conjugation}.

\section{Preliminaries \label{sec:Preliminaries}}

\subsection{One-sided singular integral operators}

We recall that a function $K\in L_{\text{loc}}^{1}(\mathbb{R}\setminus\{0\})$
is a Calderón-Zygmund kernel if the following properties hold.
\begin{enumerate}
\item There exists $B_{1}>0$ such that 
\[
\left|\int_{\varepsilon<|x|<N}K(x)dx\right|\leq B_{1}
\]
for all $0<\varepsilon<N$. Also, $\lim_{\varepsilon\rightarrow0^{+}}\int_{\varepsilon<|x|<N}K(x)dx$
exists.
\item There exists $B_{2}>0$ such that
\[
|K(x)|\leq\frac{B_{2}}{|x|}
\]
for every $x\not=0$. 
\item There exists $B_{3}>0$ such that
\begin{equation}
|K(x-y)-K(x)|\leq B_{3}\frac{|y|}{|x|^{2}}\label{eq:HLipschitz}
\end{equation}
for every $x$ and $y$ with $|x|>2|y|>0$.
\end{enumerate}
We say that $T^{+}$ is a one-sided Calderón-Zygmund singular integral
operator if 
\begin{equation}
T^{+}f(x)=\lim_{\varepsilon\rightarrow0^{+}}\int_{x+\varepsilon}^{\infty}K(x-y)f(y)dy\label{eq:defT+}
\end{equation}
where $K$ is a Calderón-Zygmund kernel with support in $\mathbb{R}^{-}$. 

Even though these operators are still Calderón-Zygmund operators,
the fact that the kernel is supported just in $\mathbb{R}^{-}$ allows
to show that $A_{p}^{+}$ which is larger than $A_{p}$ is sufficient
for the boundedness of $T^{+}$. That result was obtained in \cite{AFMR}.

We may replace (\ref{eq:HLipschitz}) by some other smoothness conditions.
We say that $K$ satisfies an $L^{r}$-Hörmander condition if there
exist numbers $c_{r},C_{r}>0$ such that for any $y\in\mathbb{R}$
and $R>c_{r}|y|$, 
\begin{equation}
\sum_{m=1}^{\infty}2^{m}R\left(\frac{1}{2^{m}R}\int_{2^{m}R<|x|\leq2^{m+1}R}|K(x-y)-K(x)|^{r}dx\right)^{\frac{1}{r}}\leq C_{r}\label{eq:LrHormander}
\end{equation}
if $1\leq r<\infty$ and
\[
\sum_{m=1}^{\infty}2^{m}R\sup_{2^{m}R<|x|\leq2^{m+1}R}|K(x-y)-K(x)|\leq C_{\infty}
\]
if $r=\infty$. 

Given $1\leq r\leq\infty$, We say that $T^{+}$ is a one-sided $L^{r'}$-Hörmander
operator if $T$ admits an expression like (\ref{eq:defT+}) in terms
of a kernel that satisfies the Calderón-Zygmund conditions but with
(\ref{eq:HLipschitz}) replaced by the $L^{r'}$-Hörmander condition.
This class of one sided operators and slightly more general ones was
studied in \cite{LRdT}.

\subsection{One-sided weights}

\subsubsection{Some definitions}

Analogously as $A_{p}^{+}$ was defined, we may define $A_{p}^{-}$
just ``reversing'' the real line. We say that $w\in A_{p}^{-}$ if
\[
[w]_{A_{p}^{-}}=\sup_{a<b<c}\frac{1}{c-a}\int_{b}^{c}w\left(\frac{1}{c-a}\int_{a}^{b}w^{-\frac{1}{p-1}}\right)^{p-1}<\infty
\]
and that $w\in A_{1}^{-}$ if 
\[
[w]_{A_{1}^{-}}=\left\Vert \frac{M^{+}w}{w}\right\Vert _{L^{\infty}}<\infty.
\]
Note that the main feature of those classes is that they characterize
the weighted strong and weak type boundedness of $M^{-}$.

We recall that the one sided $A_{p,q}$ classes are defined by
\begin{align*}
[w]_{A_{p,q}^{+}} & =\sup_{a<b<c}\frac{1}{c-a}\int_{a}^{b}w^{q}\left(\frac{1}{c-a}\int_{b}^{c}w^{-p'}\right)^{\frac{q}{p'}}<\infty,\\{}
[w]_{A_{p,q}^{-}} & =\sup_{a<b<c}\frac{1}{c-a}\int_{b}^{c}w^{q}\left(\frac{1}{c-a}\int_{a}^{b}w^{-p'}\right)^{\frac{q}{p'}}<\infty.
\end{align*}
Note that these classes of weights characterize the weighted boundedness
of the one-sided fractional integrals $I_{\alpha}^{+}$ and $I_{\alpha}^{-}$
and of the maximal functions $M_{\alpha}^{+}$ and $M_{\alpha}^{-}$
respectively as it was shown in \cite{MRdTFrac}.

A fundamental property of those classes of weight is that if $r=1+\frac{q}{p'}$
then $[w]_{A_{p,q}^{+}}=[w^{q}]_{A_{r}^{+}}$ and also $[w]_{A_{p,q}^{+}}^{p'/q}=[w^{-p'}]_{A_{r'}^{-}}.$

\subsubsection{Reverse Hölder inequality}

Since $A_{p}$ classes are increasing it is natural to define $A_{\infty}=\bigcup_{p\geq1}A_{p}$.
It was shown in \cite{F,W} that 
\[
[w]_{A_{\infty}}=\sup_{x\in Q}\frac{1}{w(Q)}\int_{Q}M(\chi_{Q}w)<\infty
\]
characterizes the $A_{\infty}$ class. In \cite{HP} a suitable sharp
reverse Hölder inequality, namely
\[
\left(\frac{1}{|Q|}\int_{Q}w^{1+\delta}\right)^{\frac{1}{1+\delta}}\leq2\frac{1}{|Q|}\int_{Q}w
\]
provided $0\leq\delta\leq\frac{1}{\tau_{n}[w]_{A_{\infty}}}$, where
$\tau_{n}$ is a constant depending on the dimension of the space
$\mathbb{R}^{n}$, was settled and used to derive a number of mixed
constant quantitative estimates. It was also shown in \cite{HP} that
this $A_{\infty}$ constant is the smallest among the known ones characterizing
that class. 

In the one sided setting the one sided $A_{\infty}$ constants were
provided in \cite{MRdT}. We have that $w\in A_{\infty}^{-}$ if 
\[
[w]_{A_{\infty}^{-}}=\sup_{I}\frac{1}{w(I)}\int_{I}M^{+}(\chi_{I}w)<\infty
\]
and analogously, $w\in A_{\infty}^{+}$ if 
\[
[w]_{A_{\infty}^{+}}=\sup_{I}\frac{1}{w(I)}\int_{I}M^{-}(\chi_{I}w)<\infty.
\]
Note that as in the classical case for every $p$
\begin{equation}
[w]_{A_{\infty}^{+}}\leq c_{p}[w]_{A_{p}^{+}},\qquad[w]_{A_{\infty}^{-}}\leq c_{p}[w]_{A_{p}^{-}}.\label{eq:AinftyAp}
\end{equation}
Furthermore, a reverse Hölder type inequality was obtained as well
in \cite[Theorem 3.4]{MRdT}. We recall it in the following Lemma.
\begin{lem}
\label{lem:OneSidedRHI}There exists $\tau>0$ such that if $w\in A_{\infty}^{-}$
and $0<\varepsilon\leq\frac{1}{\tau[w]_{A_{\infty}^{-}}}$, then for
every $a<b<c$, 

\begin{equation}
|(a,b)|^{\varepsilon}\left(\int_{b}^{c}w^{1+\varepsilon}\right)\leq2\left(\int_{a}^{c}w\right)^{1+\varepsilon}\label{eq:RHAp-}
\end{equation}
and if $w\in A_{\infty}^{+}$ and $0<\varepsilon\leq\frac{1}{\tau[w]_{A_{\infty}^{+}}}$,
then for every $a<b<c$, 
\begin{equation}
|(b,c)|^{\varepsilon}\left(\int_{a}^{b}w^{1+\varepsilon}\right)\leq2\left(\int_{a}^{c}w\right)^{1+\varepsilon}.\label{eq:RHAp+}
\end{equation}
\end{lem}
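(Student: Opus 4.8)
The statement is the one-sided reverse Hölder inequality of Lemma \ref{lem:OneSidedRHI}, and I will prove \eqref{eq:RHAp-} for $w\in A_\infty^-$; the inequality \eqref{eq:RHAp+} follows by reflecting the real line. The plan is to run the classical Calderón–Zygmund / stopping-time argument adapted to the one-sided geometry, exactly as it is done for the sharp scalar reverse Hölder inequality in \cite{HP}, but using the one-sided maximal function $M^+$ and the one-sided $A_\infty^-$ constant. Fix $a<b<c$. The first step is to pass to a \emph{local} version: for the interval $I=(a,c)$ and a parameter $\lambda\ge \langle w\rangle_{I}:=\frac{1}{|I|}\int_I w$, perform a Calderón–Zygmund decomposition of $w\chi_I$ at height $\lambda$ \emph{using only right-handed averages}, i.e. select the maximal dyadic-type intervals $I_j\subset I$ with left endpoint inside $I$ on which the average of $w$ exceeds $\lambda$. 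Because the relevant maximal operator here is $M^+$, these selected intervals are ``pushed to the right'', and crucially their right endpoints can be taken to lie in $I$ while controlling things from the left; this is the one-sided analogue of the usual Calderón–Zygmund cubes.

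The second step is the key distributional estimate. Writing $\Omega_\lambda = \{x\in I : M^+(w\chi_I)(x)>\lambda\} = \bigcup_j I_j$, one shows that for $\lambda\ge \langle w\rangle_I$,
\[
\int_{\Omega_{2\lambda}} w \;\le\; \frac{2}{\lambda}\int_{\Omega_\lambda} M^+(w\chi_I)\,w\cdot\frac{1}{?}
\]
— more precisely, the standard argument gives $w(\Omega_{2\lambda})\le C\,\langle w\rangle_{I_j}|I_j|/\lambda$ summed over $j$, and then the $A_\infty^-$ hypothesis is invoked in the form $\int_{I_j} M^+(w\chi_{I_j})\le [w]_{A_\infty^-} w(I_j)$ to replace the ``bad'' local averages by $w(I_j)$ itself, yielding
\[
w(\{M^+(w\chi_I)>2\lambda\}) \;\le\; \frac{c}{\lambda}\,[w]_{A_\infty^-}\,w(\{M^+(w\chi_I)>\lambda\}).
\]
Iterating this inequality along the sequence $\lambda_k = 2^k\langle w\rangle_I$ gives geometric decay $w(\{M^+(w\chi_I)>2^k\langle w\rangle_I\})\le 2 (c[w]_{A_\infty^-})^{-k} w(I)$ once $c[w]_{A_\infty^-}\ge 2$, say, i.e. the distribution function of $M^+(w\chi_I)$ with respect to the measure $w\,dx$ decays exponentially at a rate governed by $[w]_{A_\infty^-}$.

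The third step converts this into the $L^{1+\varepsilon}$ bound by the layer-cake formula:
\[
\int_I w^{1+\varepsilon}\le \int_I \big(M^+(w\chi_I)\big)^{\varepsilon}\,w
= \varepsilon\int_0^\infty \lambda^{\varepsilon-1} w(\{M^+(w\chi_I)>\lambda\})\,d\lambda,
\]
and splitting at $\lambda=\langle w\rangle_I$ and summing the geometric series over the dyadic layers, one finds that the sum converges and is bounded by $2\,\langle w\rangle_I^{\,\varepsilon}\, w(I)$ precisely when $\varepsilon$ is small enough that $2^{\varepsilon}/(c[w]_{A_\infty^-})<1$, i.e. $\varepsilon\le \tfrac{1}{\tau[w]_{A_\infty^-}}$ for a suitable absolute $\tau$. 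This yields $|I|^\varepsilon\int_I w^{1+\varepsilon}\le 2\big(\int_I w\big)^{1+\varepsilon}$. Finally, since $I=(a,c)\supset(b,c)$ and $w\ge 0$, we have $\int_b^c w^{1+\varepsilon}\le \int_a^c w^{1+\varepsilon}$ and $|(a,b)|^\varepsilon\le |(a,c)|^\varepsilon$, so the desired inequality \eqref{eq:RHAp-} with the pair of intervals $(a,b)$ and $(b,c)$ follows from the local one on $(a,c)$.

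\textbf{Main obstacle.} The delicate point is the first step: setting up a Calderón–Zygmund-type decomposition that is genuinely \emph{one-sided}, so that the selected intervals $I_j$ on which the $M^+$-average of $w$ is large are nested inside $I$ in the right way and their right endpoints stay inside $I$ while the overshoot of the average is controlled from the left endpoint. In the scalar case the stopping cubes are honest dyadic cubes; here one must use a right-oriented family of intervals (or the ``one-sided dyadic'' structure underlying $M^+$) and verify that the averages on stopping intervals are comparable to $\lambda$ — this is exactly where the asymmetry of $M^+$ enters and where one cannot simply quote the two-sided argument verbatim. Once that decomposition and its basic properties are in hand, steps two and three are the standard exponential-decay/layer-cake computation with $[w]_{A_\infty^-}$ in place of $[w]_{A_\infty}$. (This argument is essentially that of \cite[Theorem 3.4]{MRdT}, which we are re-deriving here.)
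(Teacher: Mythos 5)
There is a genuine gap, and it is structural rather than technical. Your strategy is to prove the \emph{two-sided} local reverse H\"older inequality $|I|^{\varepsilon}\int_{I}w^{1+\varepsilon}\leq2\left(\int_{I}w\right)^{1+\varepsilon}$ on the full interval $I=(a,c)$ for $w\in A_{\infty}^{-}$, and then obtain \eqref{eq:RHAp-} at the end by the monotonicity observations $\int_{b}^{c}w^{1+\varepsilon}\leq\int_{a}^{c}w^{1+\varepsilon}$ and $|(a,b)|\leq|(a,c)|$. But that intermediate statement is false for one-sided $A_{\infty}^{-}$ weights: take $w=\chi_{(-\infty,0)}$, which is decreasing, hence $[w]_{A_{1}^{-}}\leq1$ and also $[w]_{A_{\infty}^{-}}\leq1$ (since $M^{+}(\chi_{I}w)\leq M^{+}w\leq w$ a.e.); on $I=(-\delta,1)$ one has $|I|^{\varepsilon}\int_{I}w^{1+\varepsilon}\geq\delta$ while $2\left(\int_{I}w\right)^{1+\varepsilon}=2\delta^{1+\varepsilon}$, and the inequality fails as soon as $\delta<2^{-1/\varepsilon}$, no matter how small $\varepsilon$ is. (The weight $w(x)=\max(-x,0)$ gives the same contradiction without vanishing identically on a set.) The point is that $A_{\infty}^{-}$ weights need not be doubling and may concentrate, or vanish, near the left endpoint of an interval, which is exactly why the one-sided inequality has its asymmetric form: the $w^{1+\varepsilon}$ integral is taken only over the right piece $(b,c)$, the length factor comes from the gap $(a,b)$, and the controlling mass is over all of $(a,c)$. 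Any correct proof must use the gap in an essential way (roughly: for $x$ in $(a,b)$ the averages defining $M^{+}(\chi_{(a,c)}w)$ already see the mass of $w$ on $(b,c)$, so the Fujii--Wilson-type condition transfers information leftward across the gap); a proof that throws the gap away at the last step is deriving the target from a strictly stronger, false statement, so the plan cannot be repaired by merely tightening the stopping-time construction you flag as the ``main obstacle.''

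Two further points, secondary to the fatal one above. First, the quantitative bookkeeping in your second and third steps is not consistent: the displayed good-$\lambda$ inequality with the bare factor $1/\lambda$ is dimensionally incoherent as written, and a decay of the form $w(\{M^{+}(w\chi_{I})>2^{k}\langle w\rangle_{I}\})\lesssim(c[w]_{A_{\infty}^{-}})^{-k}w(I)$ would \emph{improve} as the $A_{\infty}^{-}$ constant grows and would yield admissible exponents $\varepsilon\sim\log[w]_{A_{\infty}^{-}}$ rather than the stated $\varepsilon\sim[w]_{A_{\infty}^{-}}^{-1}$; the correct decay should degrade like $2^{-k/(c[w]_{A_{\infty}^{-}})}$. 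Second, note that the paper itself does not prove this lemma: it is quoted verbatim from \cite[Theorem 3.4]{MRdT}, so if you want a self-contained argument you should follow the one-sided construction carried out there, where the asymmetric role of $(a,b)$ and $(b,c)$ is built in from the start.
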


It will be useful for our purposes to have the following $A_{p}$
version of the reverse Hölder inequality type above. 
\begin{lem}
\label{lem:OneSidedRHIAp}There exists $\tau_{p}>0$ such that if
$w\in A_{p}^{-}$ and $0<\varepsilon\leq\frac{1}{\tau_{p}[w]_{A_{p}^{-}}}$,
then for every $a<b<c$, 

\begin{equation}
|(a,b)|^{\varepsilon}\left(\int_{b}^{c}w^{1+\varepsilon}\right)\leq2\left(\int_{a}^{c}w\right)^{1+\varepsilon}\label{eq:RHAp-Ap}
\end{equation}
and if $w\in A_{p}^{+}$ and $0<\varepsilon\leq\frac{1}{\tau_{p}[w]_{A_{p}^{+}}}$,
then for every $a<b<c$, 
\begin{equation}
|(b,c)|^{\varepsilon}\left(\int_{a}^{b}w^{1+\varepsilon}\right)\leq2\left(\int_{a}^{c}w\right)^{1+\varepsilon}.\label{eq:RHAp+Ap}
\end{equation}
\end{lem}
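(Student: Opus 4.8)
The plan is to deduce this $A_p$-version of the reverse H\"older inequality from Lemma \ref{lem:OneSidedRHI} by first controlling the one-sided $A_\infty^\pm$ constant by the corresponding $A_p^\pm$ constant, and then feeding this bound into the $A_\infty$-type inequality. Concretely, suppose $w\in A_p^-$. By \eqref{eq:AinftyAp} we have $[w]_{A_\infty^-}\leq c_p[w]_{A_p^-}$, so if we set $\tau_p:=\tau c_p$, where $\tau$ is the constant from Lemma \ref{lem:OneSidedRHI}, then the hypothesis $0<\varepsilon\leq \frac{1}{\tau_p[w]_{A_p^-}}=\frac{1}{\tau c_p[w]_{A_p^-}}$ immediately gives $0<\varepsilon\leq \frac{1}{\tau[w]_{A_\infty^-}}$. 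Hence \eqref{eq:RHAp-} applies verbatim and yields \eqref{eq:RHAp-Ap}. The argument for $w\in A_p^+$ is identical, using \eqref{eq:RHAp+} in place of \eqref{eq:RHAp-} and the second inequality in \eqref{eq:AinftyAp}; with the same choice of $\tau_p$ the hypothesis $0<\varepsilon\leq\frac{1}{\tau_p[w]_{A_p^+}}$ forces $0<\varepsilon\leq\frac{1}{\tau[w]_{A_\infty^+}}$, and \eqref{eq:RHAp+} gives \eqref{eq:RHAp+Ap}.

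I would make sure that the constant $c_p$ appearing in \eqref{eq:AinftyAp} is indeed uniform enough for our purposes: the only thing we need is that it depends on $p$ alone (not on $w$), which is exactly what the statement of \eqref{eq:AinftyAp} asserts. One small point worth a sentence in the writeup: the monotonicity $A_p^\pm\subset A_\infty^\pm$ together with the constant comparison \eqref{eq:AinftyAp} already implies $w\in A_\infty^-$ (resp.\ $A_\infty^+$), so the appeal to Lemma \ref{lem:OneSidedRHI} is legitimate.

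The main (and essentially only) obstacle is bookkeeping of constants: one must verify that \eqref{eq:AinftyAp} is available with a $p$-dependent constant in the one-sided setting, which it is, since it is quoted in the excerpt immediately after the definitions of $[w]_{A_\infty^\pm}$. Beyond that there is no real difficulty --- the lemma is a direct corollary of Lemma \ref{lem:OneSidedRHI}. An alternative, more self-contained route would be to reprove the reverse H\"older inequality from scratch for $A_p^\pm$ weights via the Calder\'on--Coifman--Rubio de Francia--type argument used to prove Lemma \ref{lem:OneSidedRHI} in \cite{MRdT}, but this would only reproduce work already done and would not improve the constant, so I would not pursue it.
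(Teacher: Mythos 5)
Your proof is correct and is exactly the argument the paper intends: the authors simply remark that the lemma is a straightforward consequence of Lemma \ref{lem:OneSidedRHI} and \eqref{eq:AinftyAp}, which is precisely your choice $\tau_p=\tau c_p$. Nothing further is needed.
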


Note that this Lemma is an straightforward consequence of the preceding
Lemma and of (\ref{eq:AinftyAp}).

\subsubsection{A quantitative extrapolation result}

We end this section recalling a quantitative extrapolation result
that was settled in \cite{RV1}.
\begin{thm}[{\cite[Theorem 4.1]{RV1}}]
\label{thm:SharpExt}Let $T$ be a sublinear operator defined on
$\mathcal{C}_{c}^{\infty}(\mathbb{R})$. If the inequality
\[
\|Tf\|_{L^{q_{0}}(w^{q_{0}})}\lesssim[w]_{A_{p_{0},q_{0}}^{+}}^{\gamma}\|f\|_{L^{p_{0}}(w^{p_{0}})}
\]
 holds for some pair $(p_{0},q_{0})$ with $1<p_{0}\leq q_{0}<\infty$
and for all weights $w$ in the class $A_{p_{0},q_{0}}^{+}$, then
for any pair $(p,q)$ with $1<p\leq q<\infty$, satisfying $\frac{1}{p}-\frac{1}{q}=\frac{1}{p_{0}}-\frac{1}{q_{0}}$
and for any weight $w\in A_{p,q}^{+}$ the inequality
\[
\|Tf\|_{L^{q}(w^{q})}\lesssim[w]_{A_{p,q}^{+}}^{\gamma\max\left\{ 1,\frac{q_{0}}{p'_{0}}\frac{p'}{q}\right\} }\|f\|_{L^{p}(w^{p})}
\]
holds, provided the left hand-side is finite. 
\end{thm}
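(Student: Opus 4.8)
The goal is to prove Theorem~\ref{thm:SharpExt}, the quantitative one-sided extrapolation result of Riveros and Vidal. The plan is to follow the Rubio de Francia extrapolation philosophy, but carried out in the one-sided setting with careful bookkeeping of constants, and using the key identities relating one-sided $A_{p,q}$ constants to one-sided $A_r$ and $A_{r'}$ constants that were recorded in the Preliminaries, namely that with $r=1+\frac{q}{p'}$ one has $[w]_{A_{p,q}^{+}}=[w^q]_{A_r^{+}}$ and $[w]_{A_{p,q}^{+}}^{p'/q}=[w^{-p'}]_{A_{r'}^{-}}$.

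\emph{Step 1: Reduction to a Rubio de Francia algorithm.} Fix $(p,q)$ with $1<p\le q<\infty$ and $\frac1p-\frac1q=\frac1{p_0}-\frac1{q_0}$, and fix $w\in A_{p,q}^{+}$. The idea is to build, from the hypothesis at the exponent pair $(p_0,q_0)$, an inequality at $(p,q)$ by duality: write $\|Tf\|_{L^q(w^q)}$ by testing against a nonnegative $g\in L^{(q/q_0)'}$ of norm one (this is legitimate precisely because $p\le q$ forces $q\ge q_0$ or $q\le q_0$ — one must split into the two cases $q\ge q_0$ and $q\le q_0$, or more efficiently observe that the condition $\frac1p-\frac1q=\frac1{p_0}-\frac1{q_0}$ together with $p\le q$ and $p_0\le q_0$ leaves both possibilities open, so a genuine case analysis on the sign of $q-q_0$ is needed). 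In each case one introduces an iteration operator of the form $\mathcal{R}h=\sum_{k\ge0}\frac{(M^{-})^k h}{2^k\|M^{-}\|^k}$ (or $M^{+}$, depending on which side the relevant class lives on), exploiting the quantitative one-sided estimates \eqref{eq:WeakM+}: $\mathcal{R}h\ge h$, $\|\mathcal{R}h\|\le 2\|h\|$, and $\mathcal{R}h\in A_1^{\mp}$ with $[\mathcal{R}h]_{A_1^{\mp}}\lesssim \|M^{\mp}\|$, where the operator norm of $M^{\mp}$ on the appropriate weighted Lebesgue space is itself controlled by a power of $[w]_{A_{p,q}^{+}}$ via \eqref{eq:WeakM+} and the identities above.

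\emph{Step 2: Construct the extrapolation weight and track its constant.} Apply $\mathcal{R}$ to a suitable product of $g$, $w^{\pm}$ and normalizing factors so that the resulting weight $W$ satisfies $W^{q_0}\in A_{p_0,q_0}^{+}$ (equivalently $W^{q_0}\in A_{r_0}^{+}$ with $r_0=1+q_0/p_0'$). The crucial quantitative point is that the $A_1^{\mp}$ bound on the Rubio de Francia output, combined with the factorization $A_{p_0,q_0}^{+}\supseteq$ (products of $A_1^{-}$ and $A_1^{+}$ weights in the right exponents) — which one proves by a direct Hölder argument on the defining triple $a<b<c$ — gives $[W^{q_0}]_{A_{p_0,q_0}^{+}}\lesssim [w]_{A_{p,q}^{+}}^{\theta}$ for an explicit exponent $\theta$. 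Then one feeds $W$ into the hypothesis, uses $Tf$ pointwise against $g$, Hölder's inequality, and the bound $\|W\|\le 2\|\cdot\|$ to close the estimate; collecting the exponent of $[w]_{A_{p,q}^{+}}$ from the $\varphi([W^{q_0}]_{A_{p_0,q_0}^{+}})=[W^{q_0}]^{\gamma}$ term and from the two applications of $M^{\mp}$ norms yields the claimed power $\gamma\max\{1,\frac{q_0}{p_0'}\frac{p'}{q}\}$.

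\emph{Main obstacle.} The delicate part is the exponent arithmetic: one must verify that the exponents produced by (i) the $A_1$-weight factorization, (ii) the two (or one) invocations of the quantitative $M^{+}/M^{-}$ bounds, and (iii) the hypothesis exponent $\gamma$ combine to give exactly $\gamma\max\{1,\frac{q_0}{p_0'}\frac{p'}{q}\}$ and not something larger — this is where the sharpness of \eqref{eq:WeakM+} (the $\frac1{p-1}$ power for the strong type) is used in an essential way, and where the case split $q\lessgtr q_0$ manifests as the two terms inside the maximum. A secondary technical point is the standard density/finiteness caveat: the argument is carried out for $f\in\mathcal{C}_c^{\infty}$ and requires the a priori finiteness of the left-hand side, exactly as in the classical extrapolation theorem, which is why that hypothesis appears in the statement. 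Since this theorem is quoted from \cite[Theorem 4.1]{RV1}, I would at this point simply cite that reference rather than reproduce the full computation, but the sketch above is the route one follows.
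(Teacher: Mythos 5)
The paper offers no proof of this statement: Theorem~\ref{thm:SharpExt} is imported verbatim from \cite[Theorem 4.1]{RV1} and is only recalled in the Preliminaries, so your decision to ultimately cite that reference is exactly what the paper does. Your Rubio de Francia sketch (case split on $q\lessgtr q_0$, iteration with $M^{\pm}$ quantified by \eqref{eq:WeakM+}, and the $A_{p,q}^{+}$--$A_{r}^{\pm}$ constant identities) is a faithful outline of the argument actually carried out in that reference, so there is nothing to correct here.
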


\section{Proofs of the main results\label{sec:ProofsMainResults} }

\subsection{A key lemma}

In this section we present a result that essentially says that if
the one-sided $A_{p}$ condition holds with a ``gap'' then it actually
holds.
\begin{lem}
\label{lem:Gap}Let $p>1$ and let $t\geq2$ be an integer. Let $(v,w)$
be a pair of weights. If for every interval $I=(a,b)$ we have that
\begin{equation}
\frac{1}{\frac{l_{I}}{t}}\int_{a}^{a+\frac{l_{I}}{t}}v\left(\frac{1}{\frac{l_{I}}{t}}\int_{b-\frac{l_{I}}{t}}^{b}\sigma\right)^{p-1}\leq K,\label{eq:gapCondition}
\end{equation}
where $l_{I}=b-a$ and $\sigma=w^{1-p'}$, then
\[
[v,w]_{A_{p}}\leq K.
\]
\end{lem}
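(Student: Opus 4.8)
The plan is to unwind the definition and reduce to a pointwise inequality on triples: it suffices to prove that for all $\alpha<\beta<\gamma$ one has
\[
\int_{\alpha}^{\beta}v\left(\int_{\beta}^{\gamma}\sigma\right)^{p-1}\le K(\gamma-\alpha)^{p},
\]
which is precisely what the (one-sided) $A_{p}$ condition for the couple $(v,w)$ amounts to. In every regime the mechanism is the same: given such a triple, exhibit intervals $I$ to which the hypothesis applies so that, after enlarging the domains of integration, the left $\frac1t$-part of $I$ lies over a super-interval of $(\alpha,\beta)$ and the right $\frac1t$-part over (a piece of) $(\beta,\gamma)$, and then collect the resulting estimates.

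For $t=2$ a single interval suffices. I would take $I=\bigl(\min(\alpha,2\beta-\gamma),\,\max(2\beta-\alpha,\gamma)\bigr)$. Then $\beta$ is the midpoint of $I$, its left half is an interval ending at $\beta$ that contains $(\alpha,\beta)$, its right half is an interval starting at $\beta$ that contains $(\beta,\gamma)$, and $\frac{|I|}{2}=\max(\beta-\alpha,\gamma-\beta)\le\gamma-\alpha$. Applying the hypothesis to $I$ and enlarging the two integrals to $(\alpha,\beta)$ and $(\beta,\gamma)$ gives the claim with constant $K$.

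For $t\ge3$ the two $\frac1t$-parts of one interval can no longer meet at $\beta$, so I would decompose. Suppose first $\gamma-\beta\ge(t-1)(\beta-\alpha)$. Put $r=\frac{t-2}{t-1}$ and split the part of $(\beta,\gamma)$ away from $\beta$ into the consecutive subintervals $P_{k}=\bigl(\beta+r^{k}(\gamma-\beta),\,\beta+r^{k-1}(\gamma-\beta)\bigr)$. A computation shows that the interval whose right $\frac1t$-part is $P_{k}$ has left $\frac1t$-part equal to $\bigl(\beta-\frac{r^{k-1}(\gamma-\beta)}{t-1},\beta\bigr)$, an interval ending at $\beta$; as long as this contains $(\alpha,\beta)$ the hypothesis together with monotonicity yields $\int_{\alpha}^{\beta}v\,(\int_{P_k}\sigma)^{p-1}\le K\,d_k^{\,p}$ with $d_{k}=|P_{k}|=\frac{r^{k-1}(\gamma-\beta)}{t-1}$. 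Minimising over $k$ and using the elementary bound $\min_k\frac{x_k}{y_k}\le\frac{\sum_k x_k}{\sum_k y_k}$ with $x_{k}=d_{k}^{\,p'}$, $y_{k}=\int_{P_k}\sigma$, together with the geometric sum $\sum_{k\ge1}d_{k}^{\,p'}=\frac{(\gamma-\beta)^{p'}}{(t-1)^{p'}-(t-2)^{p'}}\le(\gamma-\alpha)^{p'}$ (here $p'=\frac{p}{p-1}$ and $(t-1)^{p'}-(t-2)^{p'}\ge1$), one controls $\int_\alpha^\beta v\,(\int_{\bigcup_k P_k}\sigma)^{p-1}$ by $K(\gamma-\alpha)^p$. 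The case $\beta-\alpha\ge(t-1)(\gamma-\beta)$ is treated by the symmetric decomposition of $(\alpha,\beta)$.

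The step I expect to be the real obstacle is the residual ``balanced'' range, which is nonempty exactly when $t\ge3$: when $\gamma-\beta<(t-1)(\beta-\alpha)$ and $\beta-\alpha<(t-1)(\gamma-\beta)$ the left $\frac1t$-parts above contain $(\alpha,\beta)$ only finitely often, so the decomposition of $(\beta,\gamma)$ leaves an uncontrolled $\sigma$-mass in a short interval $(\beta,\beta+\delta)$ with $\delta\ge(t-2)(\beta-\alpha)$ — equivalently, a new triple $(\alpha,\beta,\beta+\delta)$ lying in the same balanced range but at a smaller scale. The plan is to iterate the construction on this residual triple; the delicate point, and the reason the ratio $r=\frac{t-2}{t-1}$ is forced, is to arrange the bookkeeping so that the contributions of the successive scales telescope and the final constant is exactly $K$ rather than a $t$-dependent multiple of it.
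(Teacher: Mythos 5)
Your reduction of the conclusion to the one-sided pair condition $\int_{\alpha}^{\beta}v\,(\int_{\beta}^{\gamma}\sigma)^{p-1}\le K(\gamma-\alpha)^{p}$ is the right target, and your $t=2$ argument is correct and clean. But the proof is not complete: for $t\ge 3$ you explicitly leave open the ``balanced'' range, and this is not a technicality one can wave at — it is the whole difficulty. Worse, the iteration you propose cannot get started there: your own computation shows that the residual triple $(\alpha,\beta,\beta+\delta)$ satisfies $(t-2)(\beta-\alpha)\le\delta<(t-1)(\beta-\alpha)$, and in that range the first usable interval of either decomposition (the one peeling $(\beta,\gamma)$ from the right or the symmetric one peeling $(\alpha,\beta)$ from the left) already requires $\delta\ge(t-1)(\beta-\alpha)$, respectively $\beta-\alpha\ge(t-1)\delta$. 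So the recursion stalls at the first step rather than telescoping, and no choice of ratio $r$ fixes this: any single admissible interval whose left $\tfrac1t$-part contains all of $(\alpha,\beta)$ has its right $\tfrac1t$-part separated from $\beta$ by a gap of length $(t-2)(\beta-\alpha)$, which is exactly the mass you cannot reach. Note also that the applications in the paper use $t=3$ and $t=4$, so settling only $t=2$ does not suffice.

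The missing idea is to stop insisting that the left part of each test interval contain all of $(\alpha,\beta)$, and instead let it be a small piece of $(\alpha,\beta)$. The paper's proof does this: for each $x\in(\beta,\gamma)$ it sets $x_{0}=\alpha$, $x_{k+1}-x_{k}=\tfrac1t(x-x_{k})$, and applies the hypothesis to the intervals $(x_{k},x)$, whose left $\tfrac1t$-parts are exactly the pieces $(x_{k},x_{k+1})$ tiling $(\alpha,x)$ and whose right $\tfrac1t$-parts are intervals $(y,x)$ ending at $x$. Each application yields $v(x_{k},x_{k+1})\le K(x_{k+1}-x_{k})\bigl(M_{\sigma}(\tfrac1\sigma\chi_{(\alpha,x)})(x)\bigr)^{p-1}$, and summing in $k$ gives $\tfrac{v(\alpha,x)}{x-\alpha}\le K\,M_{\sigma}(\tfrac1\sigma\chi_{(\alpha,\gamma)})(x)^{p-1}$ for every $x\in(\beta,\gamma)$. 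The $A_{p}^{+}$ condition then follows from the weak type $(1,1)$ (with constant $1$) of the $\sigma$-weighted one-sided maximal operator applied to the superlevel set containing $(\beta,\gamma)$; this is how the constant comes out as exactly $K$ with no $t$-dependence. If you want to salvage your covering strategy you would need an analogous averaging device to recombine the small left pieces — at which point you have essentially rediscovered the maximal-function argument.
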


\begin{proof}
Given an interval $(a,c)$. Let $x\in(a,c)$. We call $x_{0}=a$,
$x_{k+1}-x_{k}=\frac{1}{t}(x-x_{k})$. Then we have that 
\[
(a,x)=\bigcup_{k=0}^{\infty}(x_{k},x_{k+1}].
\]
Now we fix $k$ and let $y<x$ such that $x-y=x_{k+1}-x_{k}$. Hence,
by (\ref{eq:gapCondition}),
\begin{align*}
v(x_{k},x_{k+1}) & \leq K(x_{k+1}-x_{k})\left(\frac{x-y}{\sigma(y,x)}\right)^{p-1}\\
 & \leq K(x_{k+1}-x_{k})\left(M_{\sigma}\left(\frac{1}{\sigma}\chi_{(a,x)}\right)(x)\right)^{p-1}.
\end{align*}
Summing in $k$, 
\[
v(a,x)\leq K(x-a)\left(M_{\sigma}\left(\frac{1}{\sigma}\chi_{(a,x)}\right)(x)\right)^{p-1}
\]
and consequently
\[
\frac{v(a,x)}{x-a}\leq K\left(M_{\sigma}\left(\frac{1}{\sigma}\chi_{(a,x)}\right)(x)\right)^{p-1}.
\]
Now let $a<b<c$. For every $x\in(b,c)$ we have that 
\begin{align*}
\frac{v(a,b)}{c-a}\leq\frac{v(a,x)}{x-a} & \leq K\left(M_{\sigma}\left(\frac{1}{\sigma}\chi_{(a,x)}\right)(x)\right)^{p-1}\\
 & \leq K\left(M_{\sigma}\left(\frac{1}{\sigma}\chi_{(a,c)}\right)(x)\right)^{p-1}.
\end{align*}
Note, that since 
\[
(b,c)\subset\left\{ x\,:\,M_{\sigma}\left(\frac{1}{\sigma}\chi_{(a,c)}\right)(x)\geq\left(\frac{v(a,b)}{K(c-a)}\right)^{\frac{1}{p-1}}\right\} ,
\]
by the weak type $(1,1)$ of $M_{\sigma}$ we have that 
\begin{align*}
\sigma(b,c) & \leq\sigma\left\{ x\,:\,M_{\sigma}\left(\frac{1}{\sigma}\chi_{(a,c)}\right)(x)\geq\left(\frac{v(a,b)}{K(c-a)}\right)^{\frac{1}{p-1}}\right\} \\
 & \leq\left(\frac{K(c-a)}{v(a,b)}\right)^{\frac{1}{p-1}}\int_{\mathbb{R}}\frac{1}{\sigma}\chi_{(a,c)}\sigma=\left(\frac{K(c-a)}{v(a,b)}\right)^{\frac{1}{p-1}}(c-a)
\end{align*}
from what readily follows that
\[
\frac{v(a,b)}{c-a}\left(\frac{\sigma(b,c)}{c-a}\right)^{p-1}\leq K
\]
and hence $[v,w]_{A_{p}}\leq K$ and we are done.
\end{proof}

\subsection{Proof of weighted Lorentz estimates for the maximal function}

In \cite{ADNOP} the authors provided a rather general result that
included the maximal function as a particular case, which is the following.
\begin{thm}
\cite[Theorem 2.1]{ADNOP}\label{thm:ADNOP} Let $(\Omega,\mu)$ be
a $\sigma$-finite measure space. Let $p\in(1,\infty)$, $q\in(0,\infty]$,
$r\in(1,p)$, $A>0$, and let $v,w$ be weights in $\Omega$. Suppose
$T$ is a positive sublinear operator in $L^{0}(\Omega)$ satisfying
the following properties: 
\begin{enumerate}
\item \label{enum:op1} $T$ is defined on the constant function $1$ with
$|T1|\leq A1$; 
\item \label{enum:op2} $T$ is bounded from $L^{p/r}(w)$ to $L^{p/r,\infty}(v)$. 
\end{enumerate}
Then 
\begin{align*}
\|T\|_{L^{p,q}(w)\to L^{p,q}(v)}\lesssim\begin{cases}
(1+A)(r')^{\frac{1}{p}}\|T\|_{L^{p/r}(w)\to L^{p/r,\infty}(v)}^{\frac{1}{r}} & \text{if \ensuremath{p\leq q\leq\infty};}\\
(1+A)\big(\frac{4r'}{q}\big)^{\frac{1}{q}}\|T\|_{L^{p/r}(w)\to L^{p/r,\infty}(v)}^{\frac{1}{r}} & \text{if \ensuremath{0<q\leq p}.}
\end{cases}
\end{align*}

\end{thm}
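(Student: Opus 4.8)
Since the final statement is \cite[Theorem 2.1]{ADNOP}, I only sketch the proof I would give. The plan is to interpolate, by hand, the weak endpoint estimate \eqref{enum:op2} at the exponent $p/r$ with the trivial $L^\infty$ bound furnished by \eqref{enum:op1}, keeping careful track of constants. First I would fix $0\le f\in L^{p,q}(w)$ (which one may assume) and, for each $s>0$, split $f=f\chi_{\{f\le s/A\}}+f\chi_{\{f>s/A\}}=:f_s+f^s$. By the positivity and monotonicity of $T$ and by \eqref{enum:op1}, $|Tf_s|\le (s/A)|T1|\le s$ a.e., so sublinearity gives $\{|Tf|>2s\}\subseteq\{|Tf^s|>s\}$. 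Applying \eqref{enum:op2} with $\Phi:=\|T\|_{L^{p/r}(w)\to L^{p/r,\infty}(v)}$ and writing $\lambda(t):=w(\{f>t\})$, the layer-cake formula gives
\[
v(\{|Tf|>2s\})\le \frac{\Phi^{p/r}}{s^{p/r}}\int_{\{f>s/A\}}f^{p/r}w=\frac{\Phi^{p/r}}{s^{p/r}}\left(\Big(\tfrac sA\Big)^{p/r}\lambda\big(\tfrac sA\big)+\tfrac pr\int_{s/A}^{\infty}t^{p/r-1}\lambda(t)\,dt\right).
\]

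Next I would assemble the Lorentz quasi-norm. Using $\|g\|_{L^{p,q}(v)}^q=p\int_0^\infty\big(s^p v(\{|g|>s\})\big)^{q/p}\frac{ds}{s}$ (and the evident $\sup$-version when $q=\infty$), the previous display, and the substitution $u=s/A$, one obtains
\[
\|Tf\|_{L^{p,q}(v)}^q\lesssim \Phi^{q/r}A^{q/r'}\int_0^{\infty}\left(u^{p/r'}\Big[u^{p/r}\lambda(u)+\tfrac pr\int_u^{\infty}t^{p/r-1}\lambda(t)\,dt\Big]\right)^{q/p}\frac{du}{u},
\]
and $A^{1/r'}\le 1+A$. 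It then remains to bound the displayed integral by $c(p,q,r)^q\|f\|_{L^{p,q}(w)}^q$. The ``boundary'' contribution $u^{p/r'}\cdot u^{p/r}\lambda(u)=u^p\lambda(u)$ reproduces exactly $\tfrac1p\|f\|_{L^{p,q}(w)}^q$, while for the ``tail'' I would change variables $t=u\rho$ to write, with $\Lambda(t):=t^p\lambda(t)$,
\[
u^{p/r'}\int_u^{\infty}t^{p/r-1}\lambda(t)\,dt=\int_1^{\infty}\rho^{-p/r'-1}\Lambda(u\rho)\,d\rho.
\]

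In the range $p\le q\le\infty$, Minkowski's integral inequality in $L^{q/p}(\frac{du}{u})$ (legitimate since $q/p\ge1$; directly, when $q=\infty$), together with $\int_1^\infty\rho^{-p/r'-1}\,d\rho=r'/p$ and the dilation-invariance of $\frac{du}u$, bounds the tail by $(r'/r)^{q/p}\tfrac1p\|f\|_{L^{p,q}(w)}^q$, giving $c(p,q,r)\lesssim (r')^{1/p}$. In the range $0<q\le p$ I would instead decompose $(1,\infty)=\bigcup_{k\ge0}[\beta^k,\beta^{k+1})$ for a parameter $\beta>1$, use the sub-additivity of $t\mapsto t^{q/p}$, and exploit that $\lambda$ is decreasing to compare the mean of $\Lambda$ over $[\beta^k u,\beta^{k+1}u]$ with $\Lambda(\beta^ku)$; summing the resulting geometric series and optimizing in $\beta$ produces $c(p,q,r)\lesssim (r'/q)^{1/q}$, which matches $(4r'/q)^{1/q}$ up to the numerical constant. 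Collecting the pieces, $\|Tf\|_{L^{p,q}(v)}\lesssim (1+A)\,c(p,q,r)\,\Phi^{1/r}\|f\|_{L^{p,q}(w)}$, which is the asserted bound.

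The step I expect to be the main obstacle is the one-variable estimate in the regime $0<q\le p$: because $q/p<1$ Minkowski's inequality is unavailable and an unrestricted averaging operator fails to be bounded on $L^{q/p}(\frac{du}{u})$, so it is precisely the monotonicity of $\lambda$ that must be exploited. A plain dyadic decomposition already yields the correct shape $(c\,r'/q)^{1/q}$ for the constant, but pinning down the sharp numerical value (the ``$4$'') and the precise $q^{-1/q}$ behaviour requires the sharp one-dimensional weighted Hardy inequality for monotone functions (equivalently, the sharp Marcinkiewicz constant when one endpoint is $L^\infty$). A minor, routine point is the justification of $|Tf_s|\le s$, which rests on the positivity and monotonicity of $T$ together with $|T1|\le A$, and on the a priori finiteness of the quantities involved.
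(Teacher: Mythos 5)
This statement is quoted from \cite[Theorem 2.1]{ADNOP} and the present paper gives no proof of it, so there is nothing internal to compare against; your sketch is the standard (and, in essence, the original) argument: split $f$ at height $s/A$, control the small part by hypothesis (\ref{enum:op1}), apply the weak $L^{p/r}(w)\to L^{p/r,\infty}(v)$ bound plus the layer-cake formula to the large part, and then handle the resulting Hardy-type tail term by Minkowski when $q\geq p$ and by monotonicity of the distribution function (a dyadic/Hardy inequality for monotone functions) when $0<q\leq p$. The outline is correct, including your identification of the $0<q\leq p$ case as the only delicate point and of the fact that ``positive'' must be read as monotone on nonnegative functions (which is indeed how the hypothesis is intended and used); the only caveat, which you already flag, is that the crude dyadic argument recovers the stated dependence on $r'$ and $q$ only up to harmless ($p$-dependent) constants rather than the exact factor $4$.
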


As a consequence of this result, combined with the weak type $(p,p)$
for the maximal function
\[
\|Mf\|_{L^{p,\infty}(w)}\lesssim[w]_{A_{p}}^{\frac{1}{p}}\|f\|_{L^{p}(w)}
\]
and the reverse Hölder inequality they derive sharp estimates for
$\|M\|_{L^{p,q}(w)\to L^{p,q}(w)}.$ 

\subsubsection{Proof of Theorem \ref{thm:LorentzAp+}}

For one sided weights the following result was obtained in \cite{MRdT}.
\begin{lem}[{\cite[Theorem 1.8]{MRdT}}]
\label{lem:RH}Let $1<p<+\infty$. If $w\in A_{p}^{+}$ and $\sigma=w^{1-p'}$,
$0<\delta\leq\frac{1}{2\tau[\sigma]_{A_{\infty}^{-}}}$ and $s=\frac{p+\delta}{(1+\delta)}$
then $w\in A_{s}^{+}$ and 
\[
[w]_{A_{s}^{+}}\leq2^{p}[w]_{A_{p}^{+}}.
\]
\end{lem}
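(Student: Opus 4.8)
The plan is to deduce the $A_s^+$ bound from the one-sided reverse Hölder inequality applied to the dual weight $\sigma=w^{1-p'}$, running the classical two-sided computation and absorbing the loss coming from one-sidedness into the constant. First I would record the exponent identities: from $s=\frac{p+\delta}{1+\delta}$ one gets $s-1=\frac{p-1}{1+\delta}$, hence $(s-1)(1+\delta)=p-1$ and $p-s=\delta(s-1)$, while a direct computation from $\sigma=w^{-1/(p-1)}$ gives $w^{1-s'}=w^{-(1+\delta)/(p-1)}=\sigma^{1+\delta}$, so that
\[
[w]_{A_s^+}=\sup_{a<b<c}\frac{1}{c-a}\int_a^b w\left(\frac{1}{c-a}\int_b^c\sigma^{1+\delta}\right)^{s-1}.
\]
The defining inequalities also show at once that $w\in A_p^+$ is equivalent to $\sigma\in A_{p'}^-$, with $[\sigma]_{A_{p'}^-}=[w]_{A_p^+}^{1/(p-1)}$, whence $\sigma\in A_\infty^-$ by \eqref{eq:AinftyAp}; since $\delta\le\frac{1}{2\tau[\sigma]_{A_\infty^-}}$ lies (comfortably) inside the admissible range, Lemma~\ref{lem:OneSidedRHI} applies to $\sigma$ with the exponent $\varepsilon=\delta$.

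The core step is to estimate, for a fixed triple $a<b<c$, the integral $\int_b^c\sigma^{1+\delta}$ by a constant times $\big(\int_b^c\sigma\big)^{1+\delta}$; once this is in hand the identities above make the powers of $c-a$ cancel exactly, and plugging into the displayed supremum gives $[w]_{A_s^+}\le 2^p[w]_{A_p^+}$ (hence, the right-hand side being finite, $w\in A_s^+$). In the two-sided setting this estimate is precisely the sharp reverse Hölder inequality on $(b,c)$ and yields the constant $2^{p-1}$. In the one-sided setting Lemma~\ref{lem:OneSidedRHI} controls $\int_b^c\sigma^{1+\delta}$ only by $\big(\int_{a'}^c\sigma\big)^{1+\delta}$ for a point $a'<b$, at the price of a geometric gap factor $|(a',b)|^{-\delta}$. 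I would cope with this gap by a dichotomy on the configuration: when $b-a$ is comparable to $c-a$ the interval $(a,b)$ is already a gap of the right size, and when $b-a$ is small the factor $\int_a^b w$ is correspondingly small and absorbs the loss; in both cases one combines the reverse Hölder output with the $A_p^+$ information in the form $\int_a^b w\big(\int_b^c\sigma\big)^{p-1}\le(c-a)^p[w]_{A_p^+}$ (equivalently the bound for $[\sigma]_{A_{p'}^-}$). Alternatively one may first use the key Lemma~\ref{lem:Gap} to reduce $w\in A_s^+$ to a balanced condition on intervals split into two halves, on which the reverse Hölder inequality can be applied to the right half with a gap contained inside the interval and then closed using $A_p^+$ on a neighbouring triple.

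The hard part is exactly this reconciliation: the one-sided reverse Hölder inequality carries a ``gap to the left'', and that gap sits on the same side of $b$ as the $w$-mass appearing in the $A_p^+$ ratio, so the reverse Hölder step and the $A_p^+$ structure work against one another; it is in controlling this interaction — and in tracking the constants through the dichotomy (or the halving step) and through the choice $\varepsilon=\delta$ made with the margin afforded by $\delta\le\frac{1}{2\tau[\sigma]_{A_\infty^-}}$ — that the clean two-sided constant $2^{p-1}$ gets upgraded to $2^p$.
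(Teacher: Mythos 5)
The paper itself does not prove this lemma (it is imported verbatim from \cite{MRdT}), but it does prove the closely analogous two--weight Lemma \ref{lem:RHJoint}, and comparing with that proof pinpoints what goes wrong in your primary argument. Your exponent bookkeeping is correct ($w^{1-s'}=\sigma^{1+\delta}$, $(s-1)(1+\delta)=p-1$, $p-s=\delta(s-1)$, $[\sigma]_{A_{p'}^{-}}=[w]_{A_p^{+}}^{1/(p-1)}$), and you have correctly identified the obstruction; but the dichotomy you propose does not overcome it. Whatever left endpoint $a'\in[a,b)$ you feed into Lemma \ref{lem:OneSidedRHI}, its output is $\bigl(\int_{a'}^{c}\sigma\bigr)^{1+\delta}$, and $\int_{a'}^{c}\sigma\geq\int_{a'}^{b}\sigma$ lives on the same side of $b$ as $\int_{a}^{b}w$. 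The resulting product $\int_{a}^{b}w\bigl(\int_{a}^{b}\sigma\bigr)^{p-1}$ is simply not controlled by $(c-a)^{p}[w]_{A_p^{+}}$ for one-sided weights: take $w(x)=e^{x}$, for which $[w]_{A_p^{+}}\leq1$ while this same-side product grows like $e^{\,b-a}$ --- and this failure occurs precisely in your ``good'' case $b-a\sim c-a$. The complementary case is no better, since one-sided weights have no doubling that would make $\int_{a}^{b}w$ small when $b-a$ is small. So the dichotomy on the position of $b$ cannot be repaired.

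Your alternative route is the correct one, but it must be set up with a genuine gap, which halving does not provide (with $t=2$ in Lemma \ref{lem:Gap} the two subintervals are adjacent). Take $t\geq3$, so that it suffices to bound $\int_{a}^{b}w\bigl(\int_{c}^{d}\sigma^{1+\delta}\bigr)^{s-1}$ for equally spaced $a<b<c<d$; then the reverse H\"older inequality applied to the triple $(b,c,d)$ uses the gap $(b,c)$, which lies to the \emph{right} of $b$, and outputs $\bigl(\int_{b}^{d}\sigma\bigr)^{1+\delta}$, whose support is entirely to the right of $b$, so the $A_p^{+}$ condition on $(a,b,d)$ closes the estimate. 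This is exactly how Lemma \ref{lem:RHJoint} is proved in the paper. Be aware, though, that this argument yields a constant of the form $6^{p-1}$ (as in Lemma \ref{lem:RHJoint}), not the stated $2^{p}$; the sharp constant requires the finer argument of \cite{MRdT}. For the use made of the lemma in Theorem \ref{thm:LorentzAp+}, any bound of the form $C_{p}[w]_{A_p^{+}}$ suffices.
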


Armed with that result we are in the position to give our proof of
Theorem \ref{thm:LorentzAp+}.
\begin{proof}[Proof of Theorem \ref{thm:LorentzAp+}]
Observe that for some $r>0$
\[
s=\frac{p}{r}\iff r=\frac{p}{s}.
\]
Then
\begin{align*}
r' & =\frac{p}{p-s}=\frac{p}{p-\frac{p+\delta}{(1+\delta)}}=\frac{p(1+\delta)}{p+p\delta-p-\delta}\\
 & =\frac{p(1+\delta)}{p\delta-\delta}=\frac{1+\delta}{\delta}p'=\left(1+\frac{1}{\delta}\right)p'.
\end{align*}
In particular if we choose $\delta=\frac{1}{2\tau[\sigma]_{A_{\infty}^{-}}}$
we have that 
\[
r'=\left(1+\frac{1}{\delta}\right)p'=\left(1+2\tau[\sigma]_{A_{\infty}^{-}}\right)p'\lesssim p'[\sigma]_{A_{\infty}^{-}}.
\]
Summarizing, we have that for $r=\frac{p}{s}$, 
\[
r'\lesssim p'[\sigma]_{A_{\infty}^{-}}
\]
and
\[
[w]_{A_{p/r}^{+}}\leq2^{p}[w]_{A_{p}^{+}}.
\]
Hence, plugging all these estimates in Theorem \ref{thm:ADNOP} and
taking into account (\ref{eq:WeakM+}), we arrive to the desired conclusion.
\end{proof}

\subsubsection{A result in terms of the joint $A_{p}^{+}$ condition and $A_{\infty}^{-}$\label{subsec:ResultJointAp+Ainfty-}}

It is possible to provide a result assuming the joint $A_{p}^{+}$
condition on the pair of weights $(v,w)$ and that $\sigma=w^{1-p'}\in A_{\infty}^{-}$.
The precise statement is the following.
\begin{thm}
\label{thm:LorentzAp+Joint}Let $1<p<\infty$ and $q\in(0,\infty].$
Then, if $(v,w)\in A_{p}^{+}$ and $\sigma=w^{1-p'}\in A_{\infty}^{-}$
we have that
\[
\|M^{+}\|_{L^{p,q}(w)\to L^{p,q}(w)}\lesssim\begin{cases}
(1+A)[\sigma]_{A_{\infty}^{-}}^{\frac{1}{p}}[v,w]_{A_{p}^{+}}^{\frac{1}{p}} & \text{if \ensuremath{p\leq q\leq\infty};}\\
(1+A)[\sigma]_{A_{\infty}^{-}}^{\frac{1}{q}}[v,w]_{A_{p}^{+}}^{\frac{1}{p}} & \text{if \ensuremath{0<q\leq p}}.
\end{cases}
\]
 
\end{thm}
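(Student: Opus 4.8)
The plan is to mimic the proof of Theorem \ref{thm:LorentzAp+}, but where we previously used the one-sided reverse Hölder inequality from Lemma \ref{lem:RH} (which opens up the exponent $p$ to $s = \frac{p+\delta}{1+\delta}$), we now need a two-weight analogue producing the same gain while tracking $[v,w]_{A_p^+}$ instead of $[w]_{A_p^+}$. First I would record that by Theorem \ref{thm:ADNOP} applied with the pair $(v,w)$, for any $r \in (1,p)$ for which $M^+$ is bounded $L^{p/r}(w) \to L^{p/r,\infty}(v)$ we get the stated kind of bound with norm controlled by $(r')^{1/p}$ (resp. $(4r'/q)^{1/q}$) times $\|M^+\|_{L^{p/r}(w)\to L^{p/r,\infty}(v)}^{1/r}$. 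So the whole problem reduces to two ingredients: a quantitative two-weight weak-$(s,s)$ estimate for $M^+$ under the joint $A_s^+$ condition with the right power of $[v,w]_{A_s^+}$, and a two-weight ``self-improvement'' showing $(v,w)\in A_p^+$ together with $\sigma=w^{1-p'}\in A_\infty^-$ forces $(v,w)\in A_s^+$ with $[v,w]_{A_s^+}\lesssim [v,w]_{A_p^+}$ for $s = \frac{p+\delta}{1+\delta}$, $\delta \sim 1/[\sigma]_{A_\infty^-}$.

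For the first ingredient, the classical one-weight bound $\|M^+ f\|_{L^{s,\infty}(w)}\lesssim [w]_{A_s^+}^{1/s}\|f\|_{L^s(w)}$ from \eqref{eq:WeakM+} extends verbatim to the two-weight joint setting: the standard covering/Calderón--Zygmund argument for $M^+$ never uses $v=w$, only the two-weight testing condition, so $\|M^+f\|_{L^{s,\infty}(v)}\lesssim [v,w]_{A_s^+}^{1/s}\|f\|_{L^s(w)}$. (Alternatively one can invoke de la Torre--Martín-Reyes directly in its two-weight formulation.) Then with $r=p/s$, exactly as in the proof of Theorem \ref{thm:LorentzAp+}, $r'\lesssim p'[\sigma]_{A_\infty^-}$, so $(r')^{1/p}\lesssim [\sigma]_{A_\infty^-}^{1/p}$ and $(4r'/q)^{1/q}\lesssim [\sigma]_{A_\infty^-}^{1/q}$, and $\|M^+\|_{L^{p/r}(w)\to L^{p/r,\infty}(v)}^{1/r} = \|M^+\|_{L^{s}(w)\to L^{s,\infty}(v)}^{s/p}\lesssim [v,w]_{A_s^+}^{1/p}$, which after the second ingredient becomes $\lesssim [v,w]_{A_p^+}^{1/p}$. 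Combining gives precisely the two cases in the statement.

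The main obstacle is the second ingredient, the two-weight open-up lemma. In the one-weight case Lemma \ref{lem:RH} is proved by applying the one-sided reverse Hölder inequality (Lemma \ref{lem:OneSidedRHI}) to $\sigma$ on the right factor of the $A_p^+$ quotient; here the right factor is still $\left(\frac{1}{c-a}\int_b^c \sigma\right)^{p-1}$, with the \emph{same} $\sigma=w^{1-p'}\in A_\infty^-$, so the computation is identical: for $0<\delta \le \frac{1}{2\tau[\sigma]_{A_\infty^-}}$, writing $s$ so that $\sigma = w^{1-s'}$ raised appropriately — more precisely, checking that $w^{1-s'} = \sigma^{1+\delta}$ up to the normalization used in Lemma \ref{lem:RH} — one gets $\frac{1}{c-a}\int_a^b v \left(\frac{1}{c-a}\int_b^c w^{1-s'}\right)^{s-1}\le 2^p\, \frac{1}{c-a}\int_a^b v\left(\frac{1}{c-a}\int_b^c \sigma\right)^{p-1}\le 2^p[v,w]_{A_p^+}$, using \eqref{eq:RHAp-} to pass from the $(1+\delta)$-th power back to the first power and absorbing the length factor. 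The left factor $\int_a^b v$ is untouched, so no hypothesis on $v$ beyond the joint condition is needed. This is essentially a line-by-line transcription of the proof of Lemma \ref{lem:RH} with $[w]_{A_p^+}$ replaced by $[v,w]_{A_p^+}$; the only care required is bookkeeping the exponents $s = \frac{p+\delta}{1+\delta}$ and $s' $ and verifying $w^{1-s'}$ is the correct power of $\sigma$, exactly as in \cite{MRdT}. Granting that, the theorem follows.
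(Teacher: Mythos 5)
Your overall architecture coincides with the paper's: apply Theorem \ref{thm:ADNOP} to the pair $(v,w)$, feed it the two-weight weak type bound $\|M^{+}f\|_{L^{s,\infty}(v)}\lesssim[v,w]_{A_{s}^{+}}^{1/s}\|f\|_{L^{s}(w)}$ (the paper attributes this to \cite{MR}, where it is ``obtained, although not explicitly stated''), and reduce everything to a two-weight open-up lemma $[v,w]_{A_{p/r}^{+}}\lesssim[v,w]_{A_{p}^{+}}$ with $r'\lesssim p'[\sigma]_{A_{\infty}^{-}}$. The exponent bookkeeping in your second paragraph is correct and is exactly what the paper does.

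However, your treatment of the open-up lemma — the one genuinely new ingredient — has a gap. The inequality (\ref{eq:RHAp-}) for $\sigma\in A_{\infty}^{-}$ reads $|(a,b)|^{\varepsilon}\int_{b}^{c}\sigma^{1+\varepsilon}\leq2\left(\int_{a}^{c}\sigma\right)^{1+\varepsilon}$: it requires a companion interval \emph{to the left} of where the higher power is integrated, and its right-hand side involves $\sigma$ integrated over the \emph{enlarged} interval $(a,c)$. If you apply it directly to the right factor of the joint quotient, as your displayed chain of inequalities does, you end up needing to control $\int_{a}^{b}v\left(\int_{a}^{c}\sigma\right)^{p-1}$, where the $\sigma$-integral now overlaps the interval carrying $v$; the joint $A_{p}^{+}$ condition gives no control over such overlapping configurations (and, unlike the one-weight case, there is no Muckenhoupt-type self-improvement of $v$ or $\sigma$ separately to fall back on). So the middle inequality in your display, $\leq2^{p}\frac{1}{c-a}\int_{a}^{b}v\left(\frac{1}{c-a}\int_{b}^{c}\sigma\right)^{p-1}$, does not follow from (\ref{eq:RHAp-}). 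The paper resolves this with two steps you are missing: Lemma \ref{lem:RHJoint} proves the $A_{p/r}^{+}$ condition only for quadruples $a<b<c<d$ splitting $(a,d)$ into thirds, using the middle third $(b,c)$ as the reverse-H\"older companion so that the enlarged integral $\int_{b}^{d}\sigma$ remains disjoint from $(a,b)$ and the joint condition applies to the triple $a<b<d$; and Lemma \ref{lem:Gap} (the ``gap lemma'', proved via a geometric decomposition and the weak $(1,1)$ boundedness of $M_{\sigma}$) shows that this gapped condition already implies the full $A_{p/r}^{+}$ condition with the same constant. Without an argument of this type your sketch does not close.
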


In order to settle that result we will rely upon the following Lemma.
\begin{lem}
\label{lem:RHJoint}Let $p>1$ then, if $(v,w)$ satisfies the $A_{p}^{+}$
condition for pairs of weights, namely 
\[
[v,w]_{A_{p}^{+}}=\sup_{a<b<c}\frac{1}{c-a}\int_{a}^{b}v\left(\frac{1}{c-a}\int_{b}^{c}w^{-\frac{1}{p-1}}\right)^{p-1}<\infty
\]
and $\sigma=w^{1-p'}\in A_{\infty}^{-}$ we have that 
\[
[v,w]_{A_{p/r}^{+}}\leq6^{p-1}[v,w]_{A_{p}^{+}}
\]
for $r>1$ such that $\frac{p-1}{\frac{p}{r}-1}=r_{\sigma}\leq1+\frac{1}{\left[w^{-\frac{1}{p-1}}\right]_{A_{\infty}^{-}}2\tau}$. 
\end{lem}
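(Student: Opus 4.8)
The plan is to deduce this self-improvement of the joint $A_{p}^{+}$ condition from Lemma \ref{lem:Gap}, creating the necessary room by the one-sided reverse Hölder inequality for $\sigma$. First I would record the exponent bookkeeping. Writing $s=p/r$, the hypothesis $\frac{p-1}{s-1}=r_{\sigma}$ is precisely $s-1=\frac{p-1}{r_{\sigma}}$, whence $w^{1-s'}=w^{-1/(s-1)}=\bigl(w^{-1/(p-1)}\bigr)^{r_{\sigma}}=\sigma^{r_{\sigma}}$; thus the joint $A_{p/r}^{+}$ quotient of $(v,w)$ on a triple $a<b<c$ equals $\frac{1}{c-a}\int_{a}^{b}v\bigl(\frac{1}{c-a}\int_{b}^{c}\sigma^{r_{\sigma}}\bigr)^{(p-1)/r_{\sigma}}$. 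Moreover $r_{\sigma}>1$ for every $r>1$, and the standing assumption $r_{\sigma}\le 1+\frac{1}{2\tau[\sigma]_{A_{\infty}^{-}}}$ says exactly that $\varepsilon:=r_{\sigma}-1$ satisfies $0<\varepsilon\le\frac{1}{\tau[\sigma]_{A_{\infty}^{-}}}$, so that (\ref{eq:RHAp-}) of Lemma \ref{lem:OneSidedRHI} applies to $\sigma$ with this $\varepsilon$.

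Next I would invoke Lemma \ref{lem:Gap} with the exponent $s=p/r$ in the role of $p$, the weight $\sigma^{r_{\sigma}}$ in the role of $\sigma$, and $t=3$. This reduces the claim to proving that for every interval $I=(\alpha,\beta)$, with $l=|I|$,
\[
\frac{3}{l}\int_{\alpha}^{\alpha+l/3}v\left(\frac{3}{l}\int_{\beta-l/3}^{\beta}\sigma^{r_{\sigma}}\right)^{(p-1)/r_{\sigma}}\le 6^{p-1}[v,w]_{A_{p}^{+}}.
\]
The point of taking $t=3$ is that the gap interval $(\alpha+l/3,\beta-l/3)$ and the short interval $(\beta-l/3,\beta)$ then have the same length $l/3$. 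Applying (\ref{eq:RHAp-}) with $\varepsilon=r_{\sigma}-1$ to the triple $\alpha+l/3<\beta-l/3<\beta$ yields
\[
(l/3)^{r_{\sigma}-1}\int_{\beta-l/3}^{\beta}\sigma^{r_{\sigma}}\le 2\left(\int_{\alpha+l/3}^{\beta}\sigma\right)^{r_{\sigma}},
\]
so that $\frac{3}{l}\int_{\beta-l/3}^{\beta}\sigma^{r_{\sigma}}\le 2\cdot 3^{r_{\sigma}}\bigl(\frac{1}{l}\int_{\alpha+l/3}^{\beta}\sigma\bigr)^{r_{\sigma}}$. Raising to the power $(p-1)/r_{\sigma}$ and using $r_{\sigma}\ge1$ (so that $2^{(p-1)/r_{\sigma}}\le 2^{p-1}$ and $(3^{r_{\sigma}})^{(p-1)/r_{\sigma}}=3^{p-1}$), and then estimating $\frac{1}{l}\int_{\alpha}^{\alpha+l/3}v\bigl(\frac{1}{l}\int_{\alpha+l/3}^{\beta}\sigma\bigr)^{p-1}$ by $[v,w]_{A_{p}^{+}}$ via the joint $A_{p}^{+}$ condition for the admissible triple $\alpha<\alpha+l/3<\beta$, the displayed inequality follows after collecting the numerical factors.

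The heart of the matter is the geometric matching in the last step: after discarding the reverse Hölder gain from the short interval $(\beta-l/3,\beta)$, the surviving average of $\sigma$ must sit on exactly $(\alpha+l/3,\beta)$, which is the second component of an admissible triple whose first component $(\alpha,\alpha+l/3)$ is the very slot where $v$ is averaged — and the one-sided $A_{p}^{+}$ condition only controls configurations of this shape. The gap produced by taking $t\ge 3$ in Lemma \ref{lem:Gap} is precisely what provides this room; with $t=2$ the reverse Hölder extension would unavoidably overlap the interval carrying $v$, which $A_{p}^{+}$ cannot see, and larger values of $t$ only worsen the constant, so $t=3$ is the natural choice. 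The only remaining point, the admissibility of $\varepsilon=r_{\sigma}-1$ in (\ref{eq:RHAp-}), is exactly the content of the hypothesis on $r_{\sigma}$, and nothing further is needed.
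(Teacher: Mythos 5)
Your argument is correct and is essentially the paper's own proof: reduce via Lemma \ref{lem:Gap} with $t=3$ to an estimate on the two outer thirds of an interval, use the identity $w^{-1/(\frac{p}{r}-1)}=\sigma^{r_{\sigma}}$ together with the reverse H\"older inequality (\ref{eq:RHAp-}) applied across the middle third, and then invoke the joint $A_{p}^{+}$ condition on the admissible triple $\alpha<\alpha+l/3<\beta$. The only discrepancy is cosmetic: with the normalization $\frac{1}{l/3}$ on the $v$-average that Lemma \ref{lem:Gap} actually requires, your computation yields $3\cdot 6^{p-1}$ rather than $6^{p-1}$ (the paper states its intermediate estimate with the $\frac{1}{d-a}$ normalization and so has an analogous, equally harmless, constant slippage).
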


Before settling the Lemma, note that if 
\[
\frac{p-1}{\frac{p}{r}-1}=1+\delta
\]
then we can show that 
\[
r'=p'\left(1+\frac{1}{\delta}\right)
\]
hence, choosing $\delta=\frac{1}{\left[w^{-\frac{1}{p-1}}\right]_{A_{\infty}^{-}}2\tau}$
we have that 
\[
r'=p'\left(1+\left[w^{-\frac{1}{p-1}}\right]_{A_{\infty}^{-}}2\tau\right)\lesssim p'\left[w^{-\frac{1}{p-1}}\right]_{A_{\infty}^{-}}.
\]

\begin{proof}
Observe that if given 
\[
a<b<c<d
\]
where
\[
b=a+\frac{d-a}{3}\qquad c=a+\frac{2}{3}(d-a)
\]
we have that 
\begin{equation}
\frac{1}{d-a}\int_{a}^{b}v\left(\frac{1}{d-a}\int_{c}^{d}w^{-\frac{1}{\frac{p}{r}-1}}\right)^{\frac{p}{r}-1}\leq6^{p-1}[v,w]_{A_{p}^{+}}\label{eq:ApHuecos}
\end{equation}
a direct application of Lemma \ref{lem:Gap} ends the proof. Hence
it will suffice to settle the latter. We argue as follows. Observe
that 

\begin{align*}
\int_{a}^{b}v\left(\int_{c}^{d}w^{-\frac{1}{\frac{p}{r}-1}}\right)^{\frac{p}{r}-1} & =\int_{a}^{b}v\left(\int_{c}^{d}w^{-\frac{1}{p-1}\frac{p-1}{\frac{p}{r}-1}}\right)^{\frac{1}{\frac{p-1}{\frac{p}{r}-1}}(p-1)}\\
 & =\int_{a}^{b}v\left(\int_{c}^{d}w^{-\frac{1}{p-1}r_{\sigma}}\right)^{\frac{1}{r_{\sigma}}(p-1)}\\
 & \leq(c-b)^{-\frac{r_{\sigma}-1}{r_{\sigma}}(p-1)}2^{\frac{(p-1)}{r_{\sigma}}}\int_{a}^{b}v\left(\int_{b}^{d}w^{-\frac{1}{p-1}}\right)^{(p-1)}\\
 & \leq(c-b)^{-\frac{r_{\sigma}-1}{r_{\sigma}}(p-1)}2^{\frac{(p-1)}{r_{\sigma}}}(d-a)^{p}\frac{\int_{a}^{b}v}{d-a}\left(\frac{\int_{b}^{d}w^{-\frac{1}{p-1}}}{d-a}\right)^{(p-1)}\\
 & \leq(c-b)^{-\frac{r_{\sigma}-1}{r_{\sigma}}(p-1)}2^{p-1}(d-a)^{p}[v,w]_{A_{p}^{+}}\\
 & =\left(\frac{d-a}{3}\right)^{-\frac{r_{\sigma}-1}{r_{\sigma}}(p-1)}2^{p-1}(d-a)^{p}[v,w]_{A_{p}^{+}}\\
 & =(d-a)^{p-\frac{r_{\sigma}-1}{r_{\sigma}}(p-1)}3^{\frac{r_{\sigma}-1}{r_{\sigma}}(p-1)}2^{p-1}[v,w]_{A_{p}^{+}}\\
 & =6^{p-1}(d-a)^{\frac{p}{r}}[v,w]_{A_{p}^{+}}
\end{align*}
and we are done.
\end{proof}
We do not provide a full proof of Theorem \ref{thm:LorentzAp+Joint}
but some hints. To settle that result it suffices to follow the argument
that we provided in the proof of Theorem \ref{thm:LorentzAp+} just
replacing Lemma \ref{lem:RH} by Lemma \ref{lem:RHJoint} and the
role of (\ref{eq:WeakM+}) by the following estimate 
\[
\|M^{+}f\|_{L^{p,\infty}(v)}\lesssim[v,w]_{A_{p}^{+}}^{\frac{1}{p}}\|f\|_{L^{p}(w)}
\]
that was obtained, although not explicitly stated, in \cite{MR}.

\subsection{Proof of Theorem \ref{thm:Conjugation}}

To settle Theorem \ref{thm:Conjugation} we begin settling a lemma
that tells us that if we perturb a weight $A_{p}^{+}$ by $e^{tb}$
where $b$ is a $BMO$ function and $t$ is small enough, then the
weight remains an $A_{p}^{+}$ weight.
\begin{lem}
\label{lem:etbwleqw}Let $1<p<\infty$. If $w\in A_{p}^{+}$ and $b\in BMO$
then there exists $\varepsilon_{p}>0$ such that for $0<|t|\leq\frac{\varepsilon_{p}}{[w]_{A_{p}^{+}}^{\max\left\{ 1,\frac{1}{p-1}\right\} }\|b\|_{BMO}}$
we have that 
\[
[e^{tb}w]_{A_{p}^{+}}\lesssim[w]_{A_{p}^{+}}.
\]
\end{lem}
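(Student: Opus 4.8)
The plan is to verify the $A_p^+$ condition for $w_t:=e^{tb}w$ through the Gap Lemma (Lemma~\ref{lem:Gap}), which reduces the matter to a well-separated configuration. Fix an integer $N\ge4$. Applying Lemma~\ref{lem:Gap} with the pair $(w_t,w_t)$---so that the weight playing the role of ``$\sigma$'' there is $w_t^{1-p'}=e^{-tb/(p-1)}\sigma$, with $\sigma=w^{1-p'}$---it suffices to prove that for every interval $I=(\alpha,\gamma)$ of length $L$, writing $I_1=(\alpha,\alpha+L/N)$ and $I_N=(\gamma-L/N,\gamma)$,
\[
\frac{1}{|I_1|}\int_{I_1}e^{tb}w\left(\frac{1}{|I_N|}\int_{I_N}e^{-\frac{tb}{p-1}}\sigma\right)^{p-1}\lesssim[w]_{A_p^+}.
\]

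Since $|I_1|=|I_N|$, the first step is to factor out the mean $b_I$: writing $e^{tb}=e^{t(b-b_I)}e^{tb_I}$ on $I_1$ and $e^{-tb/(p-1)}=e^{-t(b-b_I)/(p-1)}e^{-tb_I/(p-1)}$ on $I_N$, the constant $e^{tb_I}$ pulled out of the first integral cancels against $\bigl(e^{-tb_I/(p-1)}\bigr)^{p-1}=e^{-tb_I}$ coming from the second, so the left-hand side above equals $\tfrac1{|I_1|}\int_{I_1}e^{t(b-b_I)}w\,\bigl(\tfrac1{|I_N|}\int_{I_N}e^{-t(b-b_I)/(p-1)}\sigma\bigr)^{p-1}$. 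Now fix the reverse Hölder exponent. Since $w\in A_p^+$ and, by duality, $\sigma\in A_{p'}^-$ with $[\sigma]_{A_{p'}^-}=[w]_{A_p^+}^{1/(p-1)}$, set $\varepsilon:=\min\bigl(\tfrac{1}{\tau_p[w]_{A_p^+}},\tfrac{1}{\tau_{p'}[\sigma]_{A_{p'}^-}}\bigr)$; then both inequalities of Lemma~\ref{lem:OneSidedRHIAp} are available with this $\varepsilon$, and since $[w]_{A_p^+}\ge1$ one has $1+\tfrac1\varepsilon\lesssim[w]_{A_p^+}^{\max\{1,1/(p-1)\}}$. Applying Hölder with exponents $1+\varepsilon$ and $1+\tfrac1\varepsilon$ to each of the two integrals splits each into a weight factor---a power of $\tfrac1{|I_1|}\int_{I_1}w^{1+\varepsilon}$ on $I_1$ and of $\tfrac1{|I_N|}\int_{I_N}\sigma^{1+\varepsilon}$ on $I_N$---and an exponential factor $\bigl(\tfrac1{|I_1|}\int_{I_1}e^{t(1+1/\varepsilon)(b-b_I)}\bigr)^{\varepsilon/(1+\varepsilon)}$, resp.\ $\bigl(\tfrac1{|I_N|}\int_{I_N}e^{-t(1+1/\varepsilon)(b-b_I)/(p-1)}\bigr)^{\varepsilon/(1+\varepsilon)}$. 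The exponential factors are bounded by a constant using $|b-b_I|\le|b-b_{I_j}|+|b_{I_j}-b_I|$, the elementary estimate $|b_{I_j}-b_I|\le N\|b\|_{BMO}$, and the John--Nirenberg inequality on $I_j$ ($j\in\{1,N\}$); this requires exactly $|t|\,(1+\tfrac1\varepsilon)\max\{1,\tfrac1{p-1}\}\le c_0/\|b\|_{BMO}$ for an absolute $c_0>0$, which, by the bound on $1+\tfrac1\varepsilon$, is implied by $|t|\le\varepsilon_p\bigl([w]_{A_p^+}^{\max\{1,1/(p-1)\}}\|b\|_{BMO}\bigr)^{-1}$ for a suitable $\varepsilon_p=\varepsilon_p(p)>0$. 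Under this restriction we arrive at
\[
\frac{1}{|I_1|}\int_{I_1}e^{tb}w\left(\frac{1}{|I_N|}\int_{I_N}e^{-\frac{tb}{p-1}}\sigma\right)^{p-1}\lesssim\Bigl(\tfrac1{|I_1|}\int_{I_1}w^{1+\varepsilon}\Bigr)^{\frac1{1+\varepsilon}}\Bigl(\tfrac1{|I_N|}\int_{I_N}\sigma^{1+\varepsilon}\Bigr)^{\frac{p-1}{1+\varepsilon}}.
\]

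Finally, to dispose of the exponents $1+\varepsilon$ we invoke Lemma~\ref{lem:OneSidedRHIAp} once more. Inequality~(\ref{eq:RHAp+Ap}) for $w\in A_p^+$, applied to the triple $(\alpha,\,\alpha+L/N,\,\alpha+2L/N)$, gives $\bigl(\tfrac1{|I_1|}\int_{I_1}w^{1+\varepsilon}\bigr)^{1/(1+\varepsilon)}\lesssim\tfrac1{|\tilde{I}_1|}\int_{\tilde{I}_1}w$ with $\tilde{I}_1=(\alpha,\alpha+2L/N)$; symmetrically, (\ref{eq:RHAp-Ap}) for $\sigma\in A_{p'}^-$, applied to $(\gamma-2L/N,\,\gamma-L/N,\,\gamma)$, gives $\bigl(\tfrac1{|I_N|}\int_{I_N}\sigma^{1+\varepsilon}\bigr)^{1/(1+\varepsilon)}\lesssim\tfrac1{|\tilde{I}_N|}\int_{\tilde{I}_N}\sigma$ with $\tilde{I}_N=(\gamma-2L/N,\gamma)$. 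Because $N\ge4$, the intervals $\tilde{I}_1$ and $\tilde{I}_N$ are disjoint, $\tilde{I}_1$ lies entirely to the left of $\tilde{I}_N$, and $|\tilde{I}_1|=|\tilde{I}_N|=2L/N$. Enlarging the $\sigma$-integral to $(\alpha+2L/N,\gamma)$ and using the $A_p^+$ condition of $w$ on the triple $(\alpha,\,\alpha+2L/N,\,\gamma)$ then gives
\[
\Bigl(\tfrac1{|\tilde{I}_1|}\int_{\tilde{I}_1}w\Bigr)\Bigl(\tfrac1{|\tilde{I}_N|}\int_{\tilde{I}_N}\sigma\Bigr)^{p-1}\le(N/2)^{p}\,[w]_{A_p^+},
\]
and chaining this with the previous display and Lemma~\ref{lem:Gap} yields $[w_t]_{A_p^+}\lesssim[w]_{A_p^+}$, as desired.

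The step I expect to be the main obstacle---and the reason the Gap Lemma and the one-sided reverse Hölder inequalities are needed instead of their classical two-sided analogues---is that an $A_p^+$ weight need not belong to $A_p$, nor even to the two-sided $A_\infty$. Hence one cannot simply pull out the mean of $b$ over the whole interval $I$ and bound $\bigl(\tfrac1{|I|}\int_I w\bigr)\bigl(\tfrac1{|I|}\int_I\sigma\bigr)^{p-1}$ by $[w]_{A_p^+}$: that quantity is essentially the two-sided $A_p$ constant on $I$ and is genuinely not controlled by $[w]_{A_p^+}$. This forces one to keep $w$ supported in a left subinterval and $\sigma$ in a right one throughout, which is exactly why the one-sided reverse Hölder inequalities of Lemma~\ref{lem:OneSidedRHIAp} (opening the interval only slightly, to $\tilde{I}_1$ resp.\ $\tilde{I}_N$) and the Gap Lemma (absorbing the leftover gap) enter. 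A secondary point is the bookkeeping of the exponent: using the reverse Hölder exponent of $w$ as an $A_p^+$ weight and of $\sigma$ as an $A_{p'}^-$ weight at the same time, together with $[\sigma]_{A_{p'}^-}=[w]_{A_p^+}^{1/(p-1)}$, is precisely what produces the power $\max\{1,\tfrac1{p-1}\}$ of $[w]_{A_p^+}$ in the admissible range of $t$.
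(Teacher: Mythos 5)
Your argument is correct and follows essentially the same route as the paper's: reduce via the Gap Lemma to a configuration with separated end subintervals, cancel $e^{tb_{I}}$, apply H\"older with the exponent $1+\varepsilon$ coming from the one-sided reverse H\"older inequalities for $w\in A_{p}^{+}$ and $\sigma\in A_{p'}^{-}$, enlarge the subintervals to invoke the $A_{p}^{+}$ condition, and control the exponential factors by John--Nirenberg for $|t|$ in the stated range. The only (cosmetic) differences are that you enlarge each end subinterval to twice its length and apply the $A_{p}^{+}$ condition on $(\alpha,\alpha+2L/N,\gamma)$, whereas the paper enlarges both quarters to the common midpoint, and that you apply John--Nirenberg on the small subintervals (absorbing $|b_{I_{j}}-b_{I}|\le N\|b\|_{BMO}$) rather than on the whole interval.
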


\begin{proof}[Proof of Lemma \ref{lem:etbwleqw}]
We are going to prove that if
\[
a<b<m<c<d
\]
where
\[
b=a+\frac{d-a}{4}\qquad c=d-\frac{d-a}{4}
\]
and
\[
m=\frac{d+a}{2}
\]
we have that 
\begin{equation}
\frac{1}{d-a}\int_{a}^{b}e^{tb}w\left(\frac{1}{d-a}\int_{c}^{d}(e^{tb}w)^{-\frac{1}{p-1}}\right)^{p-1}\lesssim[w]_{A_{p}^{+}}.\label{eq:ApHuecos-1}
\end{equation}
Observe that provided we can settle such an estimate, in virtue of
Lemma \ref{lem:Gap} with $t=4$ we will be done. Let us settle (\ref{eq:ApHuecos-1})
then. To this end we are going to follow ideas in \cite{HComm,CPP}.
First we observe that
\begin{align*}
 & \frac{1}{d-a}\int_{a}^{b}e^{tb}w\left(\frac{1}{d-a}\int_{c}^{d}(e^{tb}w)^{-\frac{1}{p-1}}\right)^{p-1}\\
 & =\frac{1}{d-a}\int_{a}^{b}e^{tb-tb_{(a,d)}}w\left(\frac{1}{d-a}\int_{c}^{d}(e^{tb-tb_{(a,d)}}w)^{-\frac{1}{p-1}}\right)^{p-1}
\end{align*}
and hence, it suffices to show that
\[
\frac{1}{d-a}\int_{a}^{b}e^{t(b-b_{(a,d)})}w\left(\frac{1}{d-a}\int_{c}^{d}\left(e^{t(b-b_{(a,d)})}w\right)^{-\frac{1}{p-1}}\right)^{p-1}\lesssim[w]_{A_{p}^{+}}.
\]
Let $\tilde{\tau}=\max\{\tau_{p},\tau_{p'}\}$ where $\tau_{p}$ and
$\tau_{p'}$ are chosen as in Lemma \ref{lem:OneSidedRHIAp}, and
let 
\[
\varepsilon=\frac{1}{\tilde{\tau}\max\left\{ [w]_{A_{p}^{+}},\left[w^{-\frac{1}{p-1}}\right]_{A_{p'}^{-}}\right\} }=\frac{1}{\tilde{\tau}[w]_{A_{p}^{+}}^{\max\left\{ 1,\frac{1}{p-1}\right\} }}.
\]
Observe that by the Hölder inequality
\begin{align*}
\int_{a}^{b}e^{t(b-b_{(a,d)})} & w\leq\left(\int_{a}^{b}e^{t(b-b_{(a,d)})(1+\varepsilon)'}\right)^{\frac{1}{(1+\varepsilon)'}}\left(\int_{a}^{b}w^{1+\varepsilon}\right)^{\frac{1}{(1+\varepsilon)}}.
\end{align*}
For the second term by the reverse Hölder inequality in Lemma \ref{lem:OneSidedRHIAp},
\[
\left(\int_{a}^{b}w^{1+\varepsilon}\right)^{\frac{1}{(1+\varepsilon)}}\leq2\frac{1}{|(b,m)|^{\frac{\varepsilon}{1+\varepsilon}}}\int_{a}^{m}w
\]
and hence we have that
\[
\int_{a}^{b}e^{t(b-b_{(a,d)})}w\leq2\left(\int_{a}^{b}e^{t(b-b_{(a,d)})(1+\varepsilon)'}\right)^{\frac{1}{(1+\varepsilon)'}}\frac{1}{|(b,m)|^{\frac{\varepsilon}{1+\varepsilon}}}\int_{a}^{m}w.
\]
Arguing analogously
\begin{align*}
 & \left(\int_{c}^{d}(e^{t(b-b_{(a,d)})}w)^{-\frac{1}{p-1}}\right)^{p-1}\\
 & \leq2\left(\int_{c}^{d}e{}^{-\frac{t(b-b_{(a,d)})(1+\varepsilon)'}{p-1}}\right)^{\frac{p-1}{(1+\varepsilon)'}}\left(\frac{1}{|(m,c)|^{\frac{\varepsilon}{1+\varepsilon}}}\int_{m}^{d}w^{-\frac{1}{p-1}}\right)^{p-1}.
\end{align*}
Combining the estimates above
\begin{align*}
 & \frac{1}{d-a}\int_{a}^{b}e^{t(b-b_{(a,d)})}w\left(\frac{1}{d-a}\int_{c}^{d}(e^{t(b-b_{(a,d)})}w)^{-\frac{1}{p-1}}\right)^{p-1}\\
 & \lesssim\frac{1}{(d-a)^{p}}\left(\frac{1}{|(b,m)|}\int_{a}^{b}e^{t(b-b_{(a,d)})(1+\varepsilon)'}\right)^{\frac{1}{(1+\varepsilon)'}}\\
 & \times\int_{a}^{m}w\left(\frac{1}{|(m,c)|}\int_{c}^{d}e{}^{-\frac{t(b-b_{(a,d)})b(1+\varepsilon)'}{p-1}}\right)^{\frac{p-1}{(1+\varepsilon)'}}\left(\int_{m}^{d}w^{-\frac{1}{p-1}}\right)^{p-1}\\
 & =\frac{1}{(d-a)^{p}}\int_{a}^{m}w\left(\int_{m}^{d}w^{-\frac{1}{p-1}}\right)^{p-1}\\
 & \times\left(\frac{1}{|(b,m)|}\int_{a}^{b}e^{t(b-b_{(a,d)})(1+\varepsilon)'}\right)^{\frac{1}{(1+\varepsilon)'}}\left(\frac{1}{|(m,c)|}\int_{c}^{d}e{}^{-\frac{t(b-b_{(a,d)})(1+\varepsilon)'}{p-1}}\right)^{\frac{p-1}{(1+\varepsilon)'}}\\
 & \leq[w]_{A_{p}^{+}}\left(\frac{1}{|(b,m)|}\int_{a}^{b}e^{t(b-b_{(a,d)})(1+\varepsilon)'}\right)^{\frac{1}{(1+\varepsilon)'}}\left(\frac{1}{|(m,c)|}\int_{c}^{d}e{}^{-\frac{t(b-b_{(a,d)})(1+\varepsilon)'}{p-1}}\right)^{\frac{p-1}{(1+\varepsilon)'}}.
\end{align*}
Finally, choosing $|t|\leq\frac{\varepsilon_{p}}{[w]_{A_{p}^{+}}^{\max\left\{ 1,\frac{1}{p-1}\right\} }\|b\|_{BMO}}$
for some suitable $\varepsilon_{p}>0$, it is possible to show arguing
as in \cite{HComm} that
\begin{equation}
\left(\frac{1}{|(a,d)|}\int_{a}^{d}e^{t(b-b_{(a,d)})(1+\varepsilon)'}\right)^{\frac{1}{(1+\varepsilon)'}}\left(\frac{1}{|(a,d)|}\int_{a}^{d}e{}^{-\frac{t(b-b_{(a,d)})(1+\varepsilon)'}{p-1}}\right)^{\frac{p-1}{(1+\varepsilon)'}}\lesssim1.\label{eq:exp(b-bI)Ap}
\end{equation}
We include here an argument showing this estimate holds for reader's
convenience. A version of John-Nirenberg inequality (see \cite[p. 31]{J})
states that there exist dimensional constants $\lambda>0$ and $c>1$
such that for every $b\in BMO(\mathbb{R}^{n})$ 
\[
\frac{1}{|I|}\int_{I}e^{\frac{\lambda|b-b_{I}|}{\|b\|_{BMO}}}\leq c.
\]
Bearing that estimate in mind we observe that choosing $|t|\leq\frac{\lambda}{(1+\varepsilon)^{'}\|b\|_{BMO}}$
\begin{align}
\left(\frac{1}{|(a,d)|}\int_{a}^{d}e^{t(b-b_{(a,d)})(1+\varepsilon)'}\right)^{\frac{1}{(1+\varepsilon)'}} & \leq\left(\frac{1}{|(a,d)|}\int_{a}^{d}e^{|t||b-b_{(a,d)}|(1+\varepsilon)'}\right)^{\frac{1}{(1+\varepsilon)'}}\label{eq:expAp1}\\
 & \leq\left(\frac{1}{|(a,d)|}\int_{a}^{d}e^{\frac{|b-b_{(a,d)}|\lambda}{\|b\|_{BMO}}}\right)^{\frac{1}{(1+\varepsilon)'}}\leq c^{\frac{1}{(1+\varepsilon)'}}\leq c.
\end{align}
On the other hand, choosing $|t|\leq\frac{\lambda(p-1)}{(1+\varepsilon)^{'}\|b\|_{BMO}}$
we have that
\begin{equation}
\begin{split}\left(\frac{1}{|(a,d)|}\int_{a}^{d}e{}^{-\frac{t(b-b_{(a,d)})(1+\varepsilon)'}{p-1}}\right)^{\frac{p-1}{(1+\varepsilon)'}} & \leq\left(\frac{1}{|(a,d)|}\int_{a}^{d}e{}^{\frac{|t||b-b_{(a,d)}|(1+\varepsilon)'}{p-1}}\right)^{\frac{p-1}{(1+\varepsilon)'}}\\
 & \leq\left(\frac{1}{|(a,d)|}\int_{a}^{d}e^{\frac{|b-b_{(a,d)}|\lambda}{\|b\|_{BMO}}}\right)^{\frac{p-1}{(1+\varepsilon)'}}\leq c^{\frac{p-1}{(1+\varepsilon)'}}\leq c^{p}.
\end{split}
\label{eq:expAp2}
\end{equation}
Consequently, since $(1+\varepsilon)'\leq2\tilde{\tau}[w]_{A_{p}^{+}}^{\max\left\{ 1,\frac{1}{p-1}\right\} }$
we have that taking 
\[
|t|=\frac{\min\left\{ p-1,1\right\} }{2\tilde{\tau}}\lambda\frac{1}{[w]_{A_{p}^{+}}^{\max\left\{ 1,\frac{1}{p-1}\right\} }\|b\|_{BMO}}=\frac{\varepsilon_{p}}{[w]_{A_{p}^{+}}^{\max\left\{ 1,\frac{1}{p-1}\right\} }\|b\|_{BMO}}
\]
clearly $|t|\leq\min\left\{ \frac{\lambda(p-1)}{(1+\varepsilon)^{'}\|b\|_{BMO}},\frac{\lambda}{(1+\varepsilon)^{'}\|b\|_{BMO}}\right\} $,
and hence both (\ref{eq:expAp1}) and (\ref{eq:expAp2}) hold which
in turn implies that (\ref{eq:exp(b-bI)Ap}) holds as well, as we
wanted to show.
\end{proof}
Now we are in the position to settle Theorem \ref{thm:Conjugation}.
\begin{proof}[Proof of Theorem \ref{thm:Conjugation}]
Let
\[
T_{z}f(x)=e^{b(x)z}T(e^{-bz}f)(x)\qquad z\in\mathbb{C}
\]
Observe that 
\[
T_{b}^{k}f(x)=\left.\frac{d^{k}}{dz^{k}}T_{z}f\right|_{z=0}(x)=\frac{k!}{2\pi i}\int_{|z|=\delta}\frac{T_{z}f(x)}{z^{k+1}}dz.
\]
Now integrating 
\begin{align*}
\|T_{b}^{k}f\|_{L^{q}(w^{q})} & \leq\frac{k!}{2\pi}\left\Vert \int_{|z|=\delta}\frac{T_{z}f}{z^{k+1}}dz\right\Vert _{L^{q}(w^{q})}\\
 & \leq\frac{k!}{2\pi}\int_{|z|=\delta}\|T_{z}f\|_{L^{q}(w^{q})}\frac{|dz|}{|z^{k+1}|}\\
 & \leq\frac{k!}{\delta^{k}}\sup_{|z|=\delta}\|T_{z}f\|_{L^{q}(w^{q})}\\
 & =\frac{k!}{\delta^{k}}\sup_{|z|=\delta}\|e^{b(x)Re(z)}T\left(e^{-bz}f\right)\|_{L^{p}(w^{q})}\\
 & =\frac{k!}{\delta^{k}}\sup_{|z|=\delta}\|T\left(e^{-bz}f\right)\|_{L^{q}(e^{bRe(z)q}w^{q})}.
\end{align*}
Observe that by the Lemma above, choosing $\delta=\frac{\varepsilon_{p_{0}}}{[w^{q}]_{A_{p_{0}}^{+}}^{\max\left\{ 1,\frac{1}{p_{0}-1}\right\} }\|b\|_{BMO}}$
for a suitable $\varepsilon_{p_{0}}$, we have that 
\begin{align*}
\|T\left(e^{-bz}f\right)\|_{L^{q}(e^{bRe(z)q}w^{q})} & \lesssim\varphi\left(\left[e^{bRe(z)q}w^{q}\right]_{A_{p_{0}}^{+}}\right)\|e^{-bRe(z)}f\|_{L^{p}(e^{bRe(z)p}w^{p})}\\
 & \lesssim\varphi\left(\kappa[w^{q}]_{A_{p_{0}}^{+}}\right)\|f\|_{L^{p}(w^{p})}.
\end{align*}
Hence
\begin{align*}
\|T_{b}^{k}f\|_{L^{q}(w^{q})} & \lesssim\frac{k!}{\delta^{k}}\sup_{|z|=\delta}\|T\left(e^{-bz}f\right)\|_{L^{p}(e^{bRe(z)q}w^{q})}\\
 & \lesssim\varphi\left(\kappa[w^{q}]_{A_{p_{0}}^{+}}\right)\frac{1}{\delta^{k}}\|f\|_{L^{p}(w^{p})}\\
 & \lesssim\varphi\left(\kappa[w^{q}]_{A_{p_{0}}^{+}}\right)[w^{q}]_{A_{p_{0}}^{+}}^{k\max\left\{ 1,\frac{1}{p_{0}-1}\right\} }\|b\|_{BMO}^{k}\|f\|_{L^{p}(w^{p})}
\end{align*}
and we are done.
\end{proof}

\section{Corollaries of Theorem \ref{thm:Conjugation}\label{sec:CorConjugation}}

In this section we derive some corollaries of Theorem \ref{thm:Conjugation}.
We begin with the following one-sided counterpart of the bound obtained
in \cite{BMMST,CUM}
\begin{cor}
Let $0<\alpha<1$, $1<p<\frac{1}{\alpha}$ , $\frac{1}{q}=\frac{1}{p}-\alpha$
and $w\in A_{p,q}^{+}$. Let $b\in BMO$. Then
\[
\left\Vert \left(I_{\alpha}^{+}\right)_{b}^{k}\right\Vert _{L^{q}(w^{q})}\lesssim\|b\|_{BMO}^{k}[w]_{A_{p,q}^{+}}^{((k+1)-\alpha)\max\left\{ 1,\frac{p'}{q}\right\} }\|f\|_{L^{p}(w^{p})}.
\]
\end{cor}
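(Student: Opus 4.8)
The plan is to read the corollary off Theorem \ref{thm:Conjugation} applied to $T=I_{\alpha}^{+}$, feeding in as the unweighted-in-$\varphi$ hypothesis the quantitative strong-type estimate of Riveros and Vidal recorded in the introduction. First I would fix the parameters. With $\frac1q=\frac1p-\alpha$ as prescribed (so automatically $q>p>1$ since $\alpha>0$), set $p_{0}=1+\frac{q}{p'}$. Then $p_{0}\in(1,\infty)$, and by the identity from Section \ref{sec:Preliminaries} we have $[w]_{A_{p,q}^{+}}=[w^{q}]_{A_{p_{0}}^{+}}$, together with $\frac{1}{p_{0}-1}=\frac{p'}{q}$. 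Consequently, writing $\varphi(t)=t^{(1-\alpha)\max\{1,p'/q\}}$, the bound
\[
\|I_{\alpha}^{+}f\|_{L^{q}(w^{q})}\lesssim[w]_{A_{p,q}^{+}}^{(1-\alpha)\max\{1,p'/q\}}\|f\|_{L^{p}(w^{p})}
\]
becomes $\|I_{\alpha}^{+}f\|_{L^{q}(w^{q})}\lesssim\varphi([w^{q}]_{A_{p_{0}}^{+}})\|f\|_{L^{p}(w^{p})}$, valid for every $w$ with $w^{q}\in A_{p_{0}}^{+}$ (equivalently $w\in A_{p,q}^{+}$). This is exactly the hypothesis of Theorem \ref{thm:Conjugation}, with all of $1<p_{0},p,q<\infty$ in force.

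Next I would invoke Theorem \ref{thm:Conjugation} to obtain, for a suitable $\kappa>0$,
\[
\|(I_{\alpha}^{+})_{b}^{k}f\|_{L^{q}(w^{q})}\lesssim\varphi\big(\kappa[w^{q}]_{A_{p_{0}}^{+}}\big)\,[w^{q}]_{A_{p_{0}}^{+}}^{\,k\max\{1,\frac{1}{p_{0}-1}\}}\,\|b\|_{BMO}^{k}\,\|f\|_{L^{p}(w^{p})}.
\]
Since $\varphi$ is a positive power, $\varphi(\kappa t)=\kappa^{(1-\alpha)\max\{1,p'/q\}}\varphi(t)$, and the factor $\kappa^{(1-\alpha)\max\{1,p'/q\}}$ is absorbed into the implicit constant. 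Using $\frac{1}{p_{0}-1}=\frac{p'}{q}$ and $[w^{q}]_{A_{p_{0}}^{+}}=[w]_{A_{p,q}^{+}}$, the two powers of the constant combine as
\[
(1-\alpha)\max\Big\{1,\tfrac{p'}{q}\Big\}+k\max\Big\{1,\tfrac{p'}{q}\Big\}=\big((k+1)-\alpha\big)\max\Big\{1,\tfrac{p'}{q}\Big\},
\]
which is precisely the exponent claimed.

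There is no real obstacle here; the entire content lies in Theorem \ref{thm:Conjugation} and in the Riveros--Vidal estimate. The only points demanding care are bookkeeping: verifying that the two-exponent structure $L^{q}(w^{q})\leftarrow L^{p}(w^{p})$ of the $I_{\alpha}^{+}$ bound matches verbatim the hypothesis of Theorem \ref{thm:Conjugation}, that the admissible family $\{w:w^{q}\in A_{p_{0}}^{+}\}$ coincides with $A_{p,q}^{+}$ through $p_{0}=1+q/p'$, and that the homogeneity of the power $\varphi$ allows one to discard $\kappa$. I would also remark that, because the strong-type bound of \cite{RV1} is already stated for the full admissible pair $(p,q)$, the quantitative extrapolation Theorem \ref{thm:SharpExt} is not needed for this corollary.
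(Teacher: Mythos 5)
Your argument is correct, and it reaches the stated exponent by a genuinely more direct route than the paper. The paper first specializes to the pair with $\frac{2}{p}=1+\alpha$, where $q=p'$ and $[w]_{A_{p,p'}^{+}}=[w^{p'}]_{A_{2}^{+}}$, applies Theorem \ref{thm:Conjugation} with $p_{0}=2$ (so the commutator factor is simply $[w^{p'}]_{A_{2}^{+}}^{k}$), and only then recovers the general pair $(p,q)$ via the quantitative extrapolation Theorem \ref{thm:SharpExt}, which supplies the $\max\{1,\frac{p'}{q}\}$. You instead apply Theorem \ref{thm:Conjugation} once, at the target pair, with $p_{0}=1+\frac{q}{p'}$, using the identities $[w]_{A_{p,q}^{+}}=[w^{q}]_{A_{p_{0}}^{+}}$ and $\frac{1}{p_{0}-1}=\frac{p'}{q}$ from Section \ref{sec:Preliminaries}, and the exponent arithmetic $(1-\alpha)\max\{1,\frac{p'}{q}\}+k\max\{1,\frac{p'}{q}\}=((k+1)-\alpha)\max\{1,\frac{p'}{q}\}$ checks out; the homogeneity of $\varphi(t)=t^{(1-\alpha)\max\{1,p'/q\}}$ indeed lets you absorb $\kappa$. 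The trade-off is that your route consumes the Riveros--Vidal strong-type bound at every admissible pair $(p,q)$ (which is how it is quoted in the introduction, so this is legitimate), whereas the paper only needs it at the single $A_{2}^{+}$-type pair and lets extrapolation do the rest — a structure that is standard when one wants to track sharpness from a single endpoint estimate. Both yield identical final exponents, so your shortcut loses nothing here.
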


\begin{proof}
We follow ideas in \cite{CUM}. We begin settling the case 
\[
\frac{2}{p}=1+\alpha.
\]
Note that in this case we have that $q=p'$ and hence 
\[
[w]_{A_{p,p'}^{+}}=[w^{p'}]_{A_{2}^{+}}=[w^{-p'}]_{A_{2}^{-}}.
\]
We also know that
\[
\|I_{\alpha}^{+}f\|_{L^{p'}(w^{p'})}\lesssim[w]_{A_{p,p'}^{+}}^{1-\alpha}\|f\|_{L^{p}(w^{p}).}
\]
or equivalently
\[
\|I_{\alpha}^{+}f\|_{L^{p'}(w^{p'})}\lesssim[w^{p'}]_{A_{2}^{+}}^{1-\alpha}\|f\|_{L^{p}(w^{p}).}
\]
Observe that by Theorem \ref{thm:Conjugation}, choosing $p=p$, $q=p'$
and $p_{0}=2$, we have that
\[
\left\Vert \left(I_{\alpha}^{+}\right)_{b}^{k}f\right\Vert _{L^{p'}(w^{p'})}\lesssim\|b\|_{BMO}^{k}[w^{p'}]_{A_{2}^{+}}^{(1-\alpha)+k}\|f\|_{L^{p}(w^{p})}
\]
which in turn is equivalent to
\[
\left\Vert \left(I_{\alpha}^{+}\right)_{b}^{k}f\right\Vert _{L^{p'}(w^{p'})}\lesssim\|b\|_{BMO}^{k}[w]_{A_{p,p'}^{+}}^{((k+1)-\alpha)}\|f\|_{L^{p}(w^{p})}.
\]
Now extrapolation in Theorem \ref{thm:SharpExt} shows that 
\[
\left\Vert \left(I_{\alpha}^{+}\right)_{b}^{k}f\right\Vert _{L^{q}(w^{q})}\lesssim\|b\|_{BMO}^{k}[w]_{A_{p,q}^{+}}^{((k+1)-\alpha)\max\left\{ 1,\frac{p'}{q}\right\} }\|f\|_{L^{p}(w^{p})}
\]
and hence we are done.
\end{proof}
Our next result provides a quantitative estimate for the martingale
transform $G$ studied in \cite{CHL}.
\begin{cor}
Let $1<p<\infty$ and $w\in A_{p}^{+}$. Let $b\in BMO$. Then
\[
\|G_{b}^{k}f\|_{L^{q}(w^{q})}\lesssim\|b\|_{BMO}^{k}[w]_{A_{p}^{+}}^{(k+1)\max\left\{ 1,\frac{1}{p-1}\right\} }\|f\|_{L^{p}(w)}.
\]
\end{cor}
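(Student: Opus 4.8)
The plan is to argue exactly as in the preceding corollary, using the $A_2^+$ estimate \eqref{eq:A2Martingale} for $G$ from \cite{CHL} as the input: first feed \eqref{eq:A2Martingale} into Theorem~\ref{thm:Conjugation} to obtain a weighted $L^2$ bound for $G_b^k$, and then promote that bound to the full range $1<p<\infty$ by means of the sharp extrapolation Theorem~\ref{thm:SharpExt}. (Here the left-hand side is of course $\|G_b^kf\|_{L^p(w)}$.)

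Step one: apply Theorem~\ref{thm:Conjugation} with $p=q=p_0=2$ and $\varphi(t)=t$. Its hypothesis is precisely \eqref{eq:A2Martingale}, i.e.\ $\|Gf\|_{L^2(v)}\lesssim[v]_{A_2^+}\|f\|_{L^2(v)}$ for every $v\in A_2^+$ (write $v=w^2$). Since here $\varphi(\kappa t)=\kappa t\lesssim t$ and $k\max\{1,\frac{1}{p_0-1}\}=k$, the theorem produces
\[
\|G_b^kf\|_{L^2(v)}\lesssim\|b\|_{BMO}^k\,[v]_{A_2^+}^{k+1}\,\|f\|_{L^2(v)},\qquad v\in A_2^+.
\]

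Step two: recast this as an $A_{p,q}^+$ estimate and extrapolate. Writing $v=w^2$ and using $[w^q]_{A_r^+}=[w]_{A_{p,q}^+}$ with $r=1+q/p'$ --- which for $p=q=2$ reads $[w^2]_{A_2^+}=[w]_{A_{2,2}^+}$ --- the previous display becomes $\|G_b^kf\|_{L^2(w^2)}\lesssim\|b\|_{BMO}^k[w]_{A_{2,2}^+}^{k+1}\|f\|_{L^2(w^2)}$. Since $G_b^k$ is linear, hence sublinear, Theorem~\ref{thm:SharpExt} applies with $(p_0,q_0)=(2,2)$ and $\gamma=k+1$; the admissible target pairs are those with $\frac1p-\frac1q=\frac12-\frac12=0$, i.e.\ $q=p$ with $1<p<\infty$, and for each of them we get
\[
\|G_b^kf\|_{L^p(w^p)}\lesssim\|b\|_{BMO}^k\,[w]_{A_{p,p}^+}^{(k+1)\max\{1,\,\frac{q_0}{p_0'}\frac{p'}{q}\}}\,\|f\|_{L^p(w^p)},
\]
provided the left-hand side is finite. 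Now $\frac{q_0}{p_0'}=\frac22=1$ and $\frac{p'}{q}=\frac{p'}{p}=\frac1{p-1}$, so the exponent is $(k+1)\max\{1,\frac1{p-1}\}$; invoking once more the dictionary $[w]_{A_{p,p}^+}=[w^p]_{A_p^+}$ (the case $r=p$) and renaming $w^p$ as $w$ gives exactly
\[
\|G_b^kf\|_{L^p(w)}\lesssim\|b\|_{BMO}^k\,[w]_{A_p^+}^{(k+1)\max\{1,\frac{1}{p-1}\}}\,\|f\|_{L^p(w)},\qquad w\in A_p^+.
\]

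I do not expect a real obstacle: the argument is bookkeeping resting on two black boxes, Theorems~\ref{thm:Conjugation} and \ref{thm:SharpExt}, together with the $A_{p,q}^+\leftrightarrow A_r^+$ translation. The two points that need care are the exact value of the extrapolation exponent $\max\{1,\frac{q_0}{p_0'}\frac{p'}{q}\}$ when $p_0=q_0=2$, and the a priori finiteness of $\|G_b^kf\|_{L^p(w)}$ demanded by Theorem~\ref{thm:SharpExt}, which --- as in the preceding corollary --- is settled by a routine truncation/density reduction that does not affect the constants.
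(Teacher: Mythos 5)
Your argument is correct and reaches the stated exponent, but it runs the paper's two black boxes in the opposite order. You conjugate first, applying Theorem \ref{thm:Conjugation} at $p=q=p_{0}=2$ with $\varphi(t)=t$ and input (\ref{eq:A2Martingale}) to get $\|G_{b}^{k}f\|_{L^{2}(w^{2})}\lesssim\|b\|_{BMO}^{k}[w^{2}]_{A_{2}^{+}}^{k+1}\|f\|_{L^{2}(w^{2})}$, and then extrapolate the commutator via Theorem \ref{thm:SharpExt}. The paper does the reverse: it first extrapolates (\ref{eq:A2Martingale}) to obtain $\|Gf\|_{L^{p}(w)}\lesssim[w]_{A_{p}^{+}}^{\max\{1,\frac{1}{p-1}\}}\|f\|_{L^{p}(w)}$ for all $w\in A_{p}^{+}$, and then applies Theorem \ref{thm:Conjugation} with $p=q=p_{0}=p$ and $\varphi(t)=t^{\max\{1,\frac{1}{p-1}\}}$. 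The exponents agree: in your route the factor $\max\{1,\frac{q_{0}}{p_{0}'}\frac{p'}{q}\}=\max\{1,\frac{1}{p-1}\}$ multiplies $\gamma=k+1$, while in the paper's route $\max\{1,\frac{1}{p-1}\}$ appears once from $\varphi$ and $k$ more times from the conjugation exponent $k\max\{1,\frac{1}{p_{0}-1}\}$. The one substantive difference is where the a priori finiteness hypothesis of Theorem \ref{thm:SharpExt} lands: in your version it must be verified for the commutator $G_{b}^{k}$ itself (you flag this and defer to a truncation/density reduction, which is exactly the same debt the paper incurs in the preceding corollary for $(I_{\alpha}^{+})_{b}^{k}$), whereas in the paper's version extrapolation is applied only to $G$, and the conjugation step, being a Cauchy-integral argument, imposes no such hypothesis. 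In exchange, your route only needs the perturbation Lemma \ref{lem:etbwleqw} in the simplest case $p_{0}=2$. Either way the bookkeeping is right; note that you have also silently corrected the typo in the statement, whose left-hand side should read $\|G_{b}^{k}f\|_{L^{p}(w)}$.
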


\begin{proof}
First we observe that by Theorem \ref{thm:SharpExt} combined with
(\ref{eq:A2Martingale}) we have that for every $p>1$ and every $w\in A_{p}^{+}$,
\[
\|Gf\|_{L^{p}(w)}\lesssim[w]_{A_{p}^{+}}^{\max\left\{ 1,\frac{1}{p-1}\right\} }\|f\|_{L^{p}(w)}.
\]
Observe that this says that for every $w^{p}\in A_{p}^{+}$
\[
\|Gf\|_{L^{p}(w^{p})}\lesssim[w^{p}]_{A_{p}^{+}}^{\max\left\{ 1,\frac{1}{p-1}\right\} }\|f\|_{L^{p}(w^{p})}.
\]
Now we have that using Theorem \ref{thm:Conjugation} with $p=p$,
$q=p$, $p_{0}=p$, we have that for every $w^{p}\in A_{p}^{+}$,
\[
\|G_{b}^{k}f\|_{L^{p}(w^{p})}\lesssim\|b\|_{BMO}^{k}[w^{p}]_{A_{p}^{+}}^{(k+1)\max\left\{ 1,\frac{1}{p-1}\right\} }\|f\|_{L^{p}(w^{p})}
\]
and hence we are done.

Using the Theorem \ref{thm:Conjugation} we can provide as well estimates
for commutators of $b\in BMO$ and one-sided $L^{r'}$-Hörmander operators
or Calderón-Zygmund operators even though the precise dependence on
the $A_{p/r}^{+}$ constant remains an open problem. Since in particular
Calderón-Zygmund operators are $L^{\infty}$-Hörmander operators we
may provide an unified statement.
\end{proof}
\begin{cor}
Let $1\leq r<p<\infty$. Let $b\in BMO$ and $T^{+}$ an $L^{r'}$-Hörmander
operator. If for every $w\in A_{p/r}^{+}$ we have that
\[
\|T^{+}f\|_{L^{p}(w)}\lesssim\varphi_{r}([w]_{A_{p/r}^{+}})\|f\|_{L^{p}(w)},
\]
then
\[
\|(T^{+})_{b}^{k}f\|_{L^{p}(w)}\lesssim\|b\|_{BMO}^{k}\varphi_{r}([w]_{A_{p/r}^{+}})[w]_{A_{p/r}^{+}}^{k\max\left\{ 1,\frac{r}{p-r}\right\} }\|f\|_{L^{p}(w)}.
\]
\end{cor}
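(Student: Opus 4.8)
The plan is to deduce this corollary directly from Theorem~\ref{thm:Conjugation}, with no new ingredients beyond a correct choice of that theorem's free exponents. Since $1\le r<p$, the value $p_{0}:=p/r$ satisfies $1<p_{0}<\infty$, so Theorem~\ref{thm:Conjugation} is available with this $p_{0}$; I would take both Lebesgue exponents in that theorem equal to the present $p$ (so ``$p$''$=$``$q$''$=p$ there), and $\varphi=\varphi_{r}$, so that its conclusion becomes an estimate on $L^{p}(w)$.

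The first thing I would check is that the assumed bound for $T^{+}$ is exactly the hypothesis of Theorem~\ref{thm:Conjugation} after a change of variable. Setting $u=w^{p}$, one has $u\in A_{p_{0}}^{+}$ if and only if $w\in A_{p/r}^{+}$, with equal constants, so the assumed inequality
\[
\|T^{+}f\|_{L^{p}(w)}\lesssim\varphi_{r}\big([w]_{A_{p/r}^{+}}\big)\|f\|_{L^{p}(w)}\qquad(w\in A_{p/r}^{+})
\]
is literally the statement that $\|T^{+}f\|_{L^{p}(u)}\lesssim\varphi_{r}([u]_{A_{p_{0}}^{+}})\|f\|_{L^{p}(u)}$ for every $u\in A_{p_{0}}^{+}$, which is the hypothesis of Theorem~\ref{thm:Conjugation} under the identification $w^{q}=w^{p}=u$. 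I would also observe that the conjugated operators $(T^{+})_{z}f=e^{bz}T^{+}(e^{-bz}f)$ are well defined on $\mathcal{C}_{c}^{\infty}(\mathbb{R})$ for $b\in BMO$, so the contour-integral argument in the proof of Theorem~\ref{thm:Conjugation} applies to $T^{+}$ without change.

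Then Theorem~\ref{thm:Conjugation} yields, for some $\kappa>0$,
\[
\|(T^{+})_{b}^{k}f\|_{L^{p}(w)}\lesssim\varphi_{r}\big(\kappa[w]_{A_{p/r}^{+}}\big)\,[w]_{A_{p/r}^{+}}^{\,k\max\{1,\frac{1}{p_{0}-1}\}}\,\|b\|_{BMO}^{k}\,\|f\|_{L^{p}(w)},
\]
and since $p_{0}-1=\frac{p}{r}-1=\frac{p-r}{r}$, the exponent $\frac{1}{p_{0}-1}$ equals $\frac{r}{p-r}$, producing the power $k\max\{1,\frac{r}{p-r}\}$ of the statement. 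The mild discrepancy between $\varphi_{r}(\kappa[w]_{A_{p/r}^{+}})$ and $\varphi_{r}([w]_{A_{p/r}^{+}})$ is removed by recalling that the bounds available for $L^{r'}$-H\"ormander operators (\cite{LRdT}) are of power, or power-times-logarithm, type, so $\varphi_{r}$ is doubling up to a constant. The only step requiring attention is this parameter bookkeeping: one must make sure that the ``outer'' weight $w^{q}$ and ``inner'' weight $w^{p}$ of Theorem~\ref{thm:Conjugation} collapse correctly when $p=q$, and that its $A_{p_{0}}^{+}$ hypothesis on $w^{q}$ becomes exactly $w\in A_{p/r}^{+}$ after the substitution $u=w^{p}$. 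Beyond this there is no genuine obstacle --- no sparse domination or $A_{2}$-type input is needed, and indeed the corollary is conditional on whatever bound $\varphi_{r}$ one can prove for $T^{+}$.
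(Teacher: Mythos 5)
Your proposal is correct and follows exactly the paper's own (one-line) argument: apply Theorem~\ref{thm:Conjugation} with $q=p$ and $p_{0}=p/r$, note that $\tfrac{1}{p_{0}-1}=\tfrac{r}{p-r}$, and identify $u=w^{p}$ so the hypothesis on $w^{q}\in A_{p_{0}}^{+}$ becomes the assumed bound for $w\in A_{p/r}^{+}$. Your extra remark about absorbing the constant $\kappa$ inside $\varphi_{r}$ is a point the paper passes over silently, and it is handled correctly.
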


\begin{proof}
Using Theorem \ref{thm:Conjugation} with $p\text{=}p$, $q=p$, $p_{0}=p/r$,
we have that for every $w^{p}\in A_{p/r}^{+}$, 
\[
\|(T^{+})_{b}^{k}f\|_{L^{p}(w^{p})}\lesssim\|b\|_{BMO}^{k}\varphi_{r}([w]_{A_{p/r}^{+}})[w^{p}]_{A_{p/r}^{+}}^{k\max\left\{ 1,\frac{r}{p-r}\right\} }\|f\|_{L^{p}(w^{p})}
\]
and hence we are done.
\end{proof}
In view of the known results from the classical setting, namely, that
if $T$ is an $L^{r'}$-Hörmander operator, we have that 
\[
\|T^{+}f\|_{L^{p}(w)}\lesssim[w]_{A_{p/r}^{+}}^{\max\left\{ 1,\frac{1}{p-r}\right\} }\|f\|_{L^{p}(w)}
\]
It is natural to conjecture that 
\[
\|(T^{+})_{b}^{k}f\|_{L^{p}(w)}\lesssim\|b\|_{BMO}^{k}[w]_{A_{p/r}^{+}}^{k\max\left\{ 1,\frac{r}{p-r}\right\} +\max\left\{ 1,\frac{1}{p-r}\right\} }\|f\|_{L^{p}(w)}.
\]
Note that if $r=1$ the dependence would be the same as in \cite{CPP,HComm}.

\section*{Acknowledgment }

The first and the second authors were partially supported by Ministerio
de Economía y Competitividad, Spain, grant PGC2018-096166-B-I00 and
by Junta de Andalucía grant FQM-354. \\
The third author was partially supported by FONCyT grants PICT 2018-02501
and PICT 2019-00018. \\
All the authors were partially supported by Junta de Andalucía UMA18-FEDERJA002.

\bibliographystyle{abbrv}
\bibliography{refs}

\end{document}